\documentclass[a4paper,11pt]{amsart}
\pdfoutput=1

\usepackage[utf8]{inputenc}
\usepackage{mathtools,amssymb}
\usepackage{hyperref}
\usepackage{xcolor,graphicx}
\usepackage{mathrsfs}
\usepackage{enumitem}

%\mathtoolsset{showonlyrefs}

\graphicspath{{images/}}

\newtheorem{theorem}{Theorem}[section]
\newtheorem{lemma}[theorem]{Lemma}

\newtheorem{corollary}[theorem]{Corollary}
\newtheorem{remark}[theorem]{Remark}

\title[X-ray tomography of one-forms with partial data]{X-ray tomography of one-forms with partial data}
\keywords{Inverse problems, X-ray tomography, vector field tomography, normal operator, unique continuation}
\subjclass[2010]{46F12, 44A12, 58A10}

%https://mathscinet.ams.org/msc/msc2010.html
%45B05   	Fredholm integral equations
%45D05   	Volterra integral equations
%45Q05   	Inverse problems (Integral equations)
%47G10   	Integral operators
%47G30   	Pseudodifferential operators
%47G40   	Potential operators
%45P05   	Integral operators
%74B05   	Classical linear elasticity
%74B15   	Equations linearized about a deformed state (small deformations superposed on large)
%74B99   	None of the above, but in this section (Elastic materials)
%74A99   	None of the above, but in this section (Generalities, axiomatics, foundations of continuum mechanics of solids)
%74J05   	Linear waves
%74J10   	Bulk waves
%74J25   	Inverse problems (Waves; Mechanics of deformable solids)
%86A15   	Seismology
%86A17   	Global dynamics, earthquake problems
%86A22   	Inverse problems (Geophysics)
%44A05   	General transforms (Integral transforms)
%44A12   	Radon transform
%46F12   	Integral transforms in distribution spaces
%46F12 Integral transforms in distribution spaces
%44A12 Radon transform
%58A10 Differential forms in global analysis
%58A12 de Rham theory in global analysis
%53C65 Integral geometry (global dg)

\author{Joonas Ilmavirta}

\thanks{Unit of Computing Sciences, Tampere University, Kalevantie 4, FI-33014 Tampere University, Finland; \href{mailto:joonas.ilmavirta@tuni.fi}{joonas.ilmavirta@tuni.fi}}
\author{Keijo M\"onkk\"onen}
\thanks{Department of Mathematics and Statistics, University of Jyv\"askyl\"a, P.O. Box 35 (MaD) FI-40014 University of Jyv\"askyl\"a, Finland; \href{mailto:kematamo@student.jyu.fi}{kematamo@student.jyu.fi}}

\date{\today}

\usepackage{graphicx}
\newcommand{\cev}[1]{\reflectbox{\ensuremath{\vec{\reflectbox{\ensuremath{#1}}}}}}

\newcommand{\R}{{\mathbb R}}

\newcommand{\N}{{\mathbb N}}

\newcommand{\der}{{\mathrm d}}
\newcommand*{\sol}[1]{#1^\mathrm{s}}

%Kelvin transform
%Riesz potential
\newcommand{\xrt}{X}%X-ray transform
\newcommand{\no}{N}%normal operator

\newcommand{\schwartz}{\mathscr{S}}
\newcommand{\tempered}{\mathscr{S}^{\prime}}

\newcommand{\csmooth}{\mathcal{D}}%compactly supported smooth function
\newcommand{\smooth}{\mathcal{E}}%smooth function
\newcommand{\cdistr}{\mathcal{E}'}%compactly supported distribution
\newcommand{\distr}{\mathcal{D}^{\prime}}%distribution
\newcommand{\dimens}{n}%dimension
 %kernel of the Riesz potential

\newcommand{\abs}[1]{\left\lvert #1 \right\rvert}%absolute value
%norm
\newcommand{\ip}[2]{\left\langle #1,#2 \right\rangle}%inner product or duality pairing
\DeclareMathOperator{\spt}{spt}%support
\DeclareMathOperator{\conv}{Conv}%convex hull
\DeclareMathOperator{\curl}{curl}
\DeclareMathOperator{\diver}{div}

\newcommand{\NTR}[1]{}

\begin{document}

\maketitle

\begin{abstract}
If the integrals of a one-form over all lines meeting a small open set vanish and the form is closed in this set, then the one-form is exact in the whole Euclidean space. We obtain a unique continuation result for the normal operator of the X-ray transform of one-forms, and this leads to one of our two proofs of the partial data result. Our proofs apply to compactly supported covector-valued distributions.
\end{abstract}

\NTR{Updates to the manuscript have been indicated with these footnotes. We are most grateful to the referees for their feedback and suggestions, and we hope the present version addresses all their concerns.}
\NTR{Affiliation has been updated.}

\section{Introduction}

Let~$f$ be a one-form on $\R^\dimens$ where $\dimens\geq 2$.
We define the X-ray transform (also known as the Doppler transform in this case) of~$f$ by the formula
\begin{equation}
\xrt_1 f(\gamma)=\int_{\gamma}f
\end{equation}
where~$\gamma$ is a line in~$\R^\dimens$.
We freely identify one-forms with vector fields, so the differential of a scalar field corresponds to its gradient.
We are interested in the problem of reconstructing~$f$ from~$\xrt_1 f$.
One-forms of the form $f=\der\phi$ where~$\phi$ goes to zero at infinity are always in the kernel of~$\xrt_1$.
Thus one can only try to recover the solenoidal part~$\sol{f}$ of the \NTR{We removed the word ``solenoidal" in ``solenoidal decomposition".} decomposition $f=\sol{f}+\der\phi$ from the data~$\xrt_1 f$.
The transform~$\xrt_1$ is known to be solenoidally injective~\cite{PSU-tensor-tomography-progress, SHA-integral-geometry-tensor-fields}, i.e. $\xrt_1 f=0$ implies $f=\der\phi$ for some scalar function~$\phi$.
We study whether this implication holds in the whole space also in the case where we know~$\xrt_1 f$ only for a subset of lines.

We consider the following partial data problem for~$\xrt_1$. Let $V\subset\R^\dimens$ be a nonempty open set.
Assume that we know~$\der f|_V$ and~$\xrt_1 f$ on all lines intersecting~$V$, where~$\der f$ is the exterior derivative or the curl of the one-form~$f$.
Can we determine the solenoidal part~$\sol{f}$ -- find~$f$ modulo potential fields -- from this data?
We will study the uniqueness of the partial data problem: If $\der f|_V=0$ and $\xrt_1 f=0$ on all lines intersecting~$V$, does it follow that~$\sol{f}=0$?

The partial data problem for~$\xrt_1$ can be reduced to the following unique continuation problem for the normal operator $\no_1=\xrt_1^*\xrt_1$:
if $\der f|_V=0$ and $\no_1 f|_V=0$, does it follow that $\sol{f}=0$?
We prove that such unique continuation property holds for compactly supported covector-valued distributions under the weaker assumption that~$\no_1 f$ vanishes to infinite order at some point in~$V$.
The unique continuation of the normal operator implies uniqueness for the partial data problem:
The solenoidal part of a one-form~$f$ is uniquely determined whenever one knows the curl of the one-form in~$V$ and the integrals of~$f$ over all lines intersecting~$V$.

For scalar fields the uniqueness of a corresponding partial data problem and the unique continuation of the normal operator were proved in~\cite{IM-unique-continuation-riesz-potential}.
We generalize the results to one-forms using the results for scalar fields in our proofs.
We also obtain partial data results and unique continuation results for the generalized X-ray transform of one-forms $\xrt_A=\xrt_1\circ A$ where~$A$ is a smooth invertible matrix-valued function.
As a special case of this transform we study the transverse ray transform in~$\R^2$.

We give two alternative proofs for the partial data results.
The first one uses the unique continuation of the normal operator while the second one works directly at the level of the X-ray transform and is based on Stokes' theorem. 

The X-ray transform of one-forms or vector fields has applications in the determination of velocity fields of moving fluids using acoustic travel time measurements~\cite{NO-tomographic-recostruction-of-vector-fields} or Doppler backscattering measurements~\cite{NO-unique-tomographic-reconstruction-doppler}.
Medical applications include ultrasound imaging of blood flows~\cite{JASESPL-ultrasound-doppler-tomography, JUH-principles-of-doppler-tomography, SSLP-doppler-tomography-vector-fields}.
The transverse ray transform of one-forms has applications in the temperature measurements of flames~\cite{BH-tomographic-reconstruction-vector-fields, SCHWA-flame-analysis-schlieren}.
For two-tensors the applications include also diffraction tomography~\cite{LW-diffraction-tomography}, photoelasticity~\cite{HL-applications-to-photoelasticity} and polarization tomography~\cite{SHA-integral-geometry-tensor-fields}.
For a more comprehensive treatment see the reviews~\cite{SCHU-20-years-of-vector-tomography, SCHU-importance-of-vector-field-tomography, SS-vector-field-overview} and the references therein.

We will give our main results in section~\ref{sec:mainresults} and discuss related results in section~\ref{sec:relatedresults}.
The preliminaries are covered in section~\ref{sec:preliminaries} and finally the theorems are proven in section~\ref{sec:proofsofthemainresults}.

\subsection{Main results}
\label{sec:mainresults}
Here we give the main results of this paper.
The proofs can be found in section~\ref{sec:proofsofthemainresults}.
First we briefly go through our notation; for more detailed definitions see section~\ref{sec:preliminaries}.

\NTR{Added ``...see \textbf{equation} \eqref{eq:normaloperatorvectorfield}...".}Let $\cdistr(\R^\dimens)$ be the space of compactly supported distributions.
By $f\in(\cdistr(\R^\dimens))^\dimens$ we mean that $f=(f_1, \dotso, f_\dimens)$ where $f_i\in\cdistr(\R^\dimens)$ for all $i=1, \dotso, \dimens$.
We call $(\cdistr(\R^\dimens))^\dimens$ the space of compactly supported covector-valued distributions.
We denote by~$\xrt_1$ the X-ray transform of one-forms and by $\no_1=\xrt_1^*\xrt_1$ its normal operator; see equation~\eqref{eq:normaloperatorvectorfield} for an explicit formula.

We say that~$\no_1f$ vanishes to infinite order at~$x_0\in\R^\dimens$ if it is smooth in a neighborhood of~$x_0$ and $\partial^{\beta}(\no_1f)_i(x_0)=0$ for all $\beta\in\N^\dimens$ and $i=1, \dotso, \dimens$.
We denote the exterior derivative of differential forms by~$\der$.
When acting on scalars, it corresponds to the gradient.

%We denote by~$\der f$ the exterior derivative whose components are $(\der f)_{ij}=\partial_i f_j-\partial_j f_i$.

\NTR{Added ``...for scalar fields and the \textbf{normal} operator..."}Our first result is a unique continuation property for the normal operator~$\no_1$.
The corresponding result for scalar fields and the normal operator $\no_0=\xrt_0^*\xrt_0$ of the scalar X-ray transform~$\xrt_0$ (see equation~\eqref{eq:normaloperatorofscalarfield}) was proven in~\cite[Theorem 1.1]{IM-unique-continuation-riesz-potential}.

\begin{theorem}
\label{thm:uniquecontinuationofnormaloperator}
Let $f\in (\cdistr(\R^\dimens))^\dimens$ and $V\subset\R^\dimens$ some nonempty open set.
If $\der f|_V=0$ and~$\no_1 f$ vanishes to infinite order at $x_0\in V$, then $f=\der\phi$ for some $\phi\in\cdistr(\R^\dimens)$.
%$\sol{f}=0$.
\end{theorem}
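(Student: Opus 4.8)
The plan is to reduce the vector-valued statement to the known scalar unique continuation result of \cite[Theorem 1.1]{IM-unique-continuation-riesz-potential} by exploiting the explicit form of the normal operator $\no_1$. The normal operator of the scalar X-ray transform is, up to a constant, convolution with the Riesz kernel $\abs{x}^{1-\dimens}$, so $\no_0$ acts componentwise as a Riesz potential $\riesz$ with $\alpha = 1$. For one-forms the normal operator $\no_1$ has a matrix kernel: schematically $(\no_1 f)_i = \sum_j \no_1^{ij} f_j$, where the kernel is of the form $c\abs{x}^{-1-\dimens}(\abs{x}^2\delta_{ij} - x_i x_j)$, i.e. the scalar Riesz kernel times the projection onto the hyperplane orthogonal to the direction of integration, integrated over directions. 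The first step is therefore to write $\no_1$ explicitly and relate it to the scalar Riesz potential, so that each entry is built from $\riesz$ composed with a constant-coefficient differential (or projection) operator acting on the components of $f$.

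The key idea is that the closedness assumption $\der f|_V = 0$ lets me introduce a local scalar potential. On the open set $V$ the one-form $f$ is closed, and on any simply connected subset of $V$ one can write $f = \der u$ for some scalar $u$. The right object to feed into the scalar theorem, however, should be a globally defined scalar quantity derived from $f$ whose scalar normal operator $\no_0$ is controlled by $\no_1 f$. First I would compute the divergence $\diver f$ (a compactly supported distribution) and observe that, because of the structure of the matrix kernel of $\no_1$, the scalar quantities $\diver(\no_1 f)$ and the entries $(\no_1 f)_i$ can be expressed through $\no_0$ applied to the components $f_j$ and to $\diver f$. Concretely, contracting $\no_1 f$ appropriately should yield expressions of the form $\no_0(\diver f)$ or $\partial_i \partial_j \riesz(f_k)$, each of which is a scalar function to which the infinite-order vanishing at $x_0$ transfers.

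The central step is then the following: from the infinite-order vanishing of $\no_1 f$ at $x_0 \in V$, I deduce that a suitable scalar combination — I expect $\no_0(\diver f)$, or equivalently the scalar normal operator applied to $\diver f$ — vanishes to infinite order at $x_0$, while the closedness $\der f|_V = 0$ supplies the hypothesis that $\diver f$ (or the relevant scalar) is "closed" in the trivial scalar sense in $V$. Applying the scalar unique continuation theorem \cite[Theorem 1.1]{IM-unique-continuation-riesz-potential} to this scalar field forces it to vanish, which upgrades to $\diver f = 0$ globally, or more precisely forces the solenoidal part to carry no nontrivial content. Combining this with the local exactness from $\der f|_V = 0$ and the fact that a closed, compactly supported one-form on $\R^\dimens$ whose solenoidal part vanishes must be exact yields $f = \der\phi$ with $\phi \in \cdistr(\R^\dimens)$.

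The main obstacle I anticipate is the bookkeeping that converts the matrix-valued infinite-order vanishing of $\no_1 f$ at a single point into a single scalar statement to which the scalar theorem applies without losing information. Because $\no_1$ mixes components through both the Riesz kernel and a projection, I must choose the contractions carefully so that the resulting scalar is genuinely of the form $\no_0 g$ for a compactly supported $g$ built from $f$, and so that $\der f|_V = 0$ translates into precisely the hypothesis the scalar theorem requires. The delicate point is ensuring that the differential operators relating $\no_1$ and $\no_0$ commute correctly with the local closedness, and that the passage from the scalar conclusion back to the vanishing of $\sol{f}$ — hence to exactness $f = \der\phi$ — is valid at the level of compactly supported distributions rather than smooth functions.
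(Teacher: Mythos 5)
Your overall strategy---reducing to the scalar unique continuation theorem via the explicit kernel of $\no_1$---is the right one, and it is the paper's strategy too, but the scalar reduction you chose fails. You propose to contract $\no_1 f$ so as to produce $\no_0(\diver f)$ and then conclude $\diver f=0$. This breaks down at three points. First, the contraction carries no information: since $\xrt_1$ annihilates exact forms, $\no_1(\der\psi)=0$, and by self-adjointness of $\no_1$ one gets $\ip{\diver(\no_1 f)}{\varphi}=-\ip{f}{\no_1(\der\varphi)}=0$, so $\diver(\no_1 f)\equiv 0$ identically for \emph{every} $f$; it cannot equal a nonzero multiple of $\no_0(\diver f)$. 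Second, the hypothesis $\der f|_V=0$ does not supply the hypothesis the scalar theorem needs for $g=\diver f$: closedness on $V$ says nothing about $\diver f|_V$ (locally $f=\der u$ with $\der f=0$ but $\diver f=\Delta u$ arbitrary). Third, the intermediate conclusion you aim for is simply false under the theorem's hypotheses: take $f=\der\phi$ for any bump function $\phi\in\csmooth(\R^\dimens)$; then $\der f=0$ everywhere and $\no_1 f=0$, so all hypotheses hold, yet $\diver f=\Delta\phi\neq 0$. This is forced by gauge invariance: the hypotheses and the conclusion $f=\der\phi$ are invariant under $f\mapsto f+\der\psi$, while $\diver f$ is not, so no correct proof can pass through $\diver f=0$.

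The correct scalar reduction is through the curl, not the divergence. Writing the kernel as
\begin{equation}
\frac{2x_ix_j}{\abs{x}^{\dimens+1}}
=
\frac{2}{\dimens-1}\Bigl(\delta_{ij}\abs{x}^{1-\dimens}-\partial_i\bigl(x_j\abs{x}^{1-\dimens}\bigr)\Bigr),
\end{equation}
one computes the commutation identity
\begin{equation}
\partial_k(\no_1 f)_i-\partial_i(\no_1 f)_k
=
\frac{1}{\dimens-1}\,\no_0\bigl(\partial_k f_i-\partial_i f_k\bigr),
\end{equation}
i.e. $\der(\no_1 f)=(\dimens-1)^{-1}\no_0(\der f)$ componentwise. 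Now each component $g=(\der f)_{ik}\in\cdistr(\R^\dimens)$ satisfies exactly the two hypotheses of the scalar theorem: $g|_V=0$ is precisely your assumption $\der f|_V=0$ (the gauge-invariant datum), and $\no_0 g$ vanishes to infinite order at $x_0$ because $\no_1 f$ does. The scalar unique continuation result then gives $\der f=0$ globally, and the distributional Poincar\'e lemma with compact supports yields $f=\der\phi$ with $\phi\in\cdistr(\R^\dimens)$---note that global exactness here comes from global closedness alone, not from any statement about the solenoidal part, so the final step of your outline also needs no divergence input.
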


We point out that as~$\der f$ vanishes in~$V$, the distribution~$\no_1f$ is smooth in~$V$ by lemma~\ref{lma:N1smooth} and the vanishing condition at a point is well-defined.

Theorem~\ref{thm:uniquecontinuationofnormaloperator} is also true under the weaker assumption that $\der f|_V=0$ and $\der (\no_1 f)$ vanishes to infinite order at~$x_0$ (see the proof in section~\ref{subsec:proofsusingucp}).
The condition that~$f$ is closed in~$V$ (i.e. $\der f|_V=0$) is satisfied if, for example, $f|_V=0$.
When~$f$ is solenoidal (i.e. $\diver(f)=0$), theorem~\ref{thm:uniquecontinuationofnormaloperator} gives the following unique continuation property: if $f|_V=\no_1 f|_V=0$, then $f=0$.

The next result is stated directly at the level of the X-ray transform.
The corresponding problem with full data was solved in~\cite[Theorem 2.5.1]{SHA-integral-geometry-tensor-fields}.

\begin{theorem}
\label{thm:globalpartialdataresult}
Let $f\in (\cdistr(\R^\dimens))^\dimens$ and $V\subset\R^\dimens$ some nonempty open set. Assume that $\der f|_V=0$.
Then $\xrt_1 f$ vanishes on all lines intersecting~$V$ if and only if $f=\der\phi$ for some $\phi\in\cdistr (\R^\dimens)$.% and $\spt(\phi)$ is contained in the convex hull of $\spt(f)$. 
\end{theorem}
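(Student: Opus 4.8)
The plan is to derive Theorem~\ref{thm:globalpartialdataresult} from the unique continuation result of Theorem~\ref{thm:uniquecontinuationofnormaloperator}, exploiting the fact that the normal operator backprojects the X-ray data over the lines through each point. The forward implication is the easy one: if $f=\der\phi$ with $\phi\in\cdistr(\R^\dimens)$, then the integral of the exact one-form $\der\phi$ over any line is the difference of $\phi$ at the two endpoints at infinity, which vanishes because $\phi$ is compactly supported; equivalently $\xrt_1\circ\der=0$ as operators, which one checks against test data using the duality set up in the preliminaries. Hence $\xrt_1 f=0$ on \emph{every} line, in particular on all lines meeting~$V$. Note that for this direction the hypothesis $\der f|_V=0$ is automatic, since $\der f=\der\der\phi=0$.

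For the converse, I would assume $\der f|_V=0$ and $\xrt_1 f=0$ on every line meeting~$V$, and reduce to Theorem~\ref{thm:uniquecontinuationofnormaloperator} by showing that $\no_1 f$ vanishes on~$V$. The key observation is that, writing $\no_1 f=\xrt_1^*(\xrt_1 f)$, the backprojection $\xrt_1^*g$ at a point~$x$ sees only the lines passing through~$x$. Since every line through a point $x\in V$ meets~$V$, the data $\xrt_1 f$ vanish on all such lines, so $\no_1 f$ vanishes on~$V$; in the distributional formulation this is the statement that $\xrt_1 f$ is supported away from the lines meeting~$V$, whence $\spt(\no_1 f)\cap V=\emptyset$. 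In particular $\no_1 f$ vanishes to infinite order at every point of~$V$, so $\der f|_V=0$ together with Theorem~\ref{thm:uniquecontinuationofnormaloperator} yields $f=\der\phi$ for some $\phi\in\cdistr(\R^\dimens)$, completing the proof.

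I expect the main obstacle to be making the backprojection/support argument rigorous for compactly supported covector-valued distributions, that is, justifying $\spt(\no_1 f)\cap V=\emptyset$ directly from the vanishing of $\xrt_1 f$ on the lines meeting~$V$ without appealing to pointwise evaluation. This needs the careful distributional definitions of $\xrt_1$, its adjoint~$\xrt_1^*$, and $\no_1$ from the preliminaries, together with the support-propagation property of the adjoint. An alternative route, which bypasses the normal operator entirely, is to argue directly at the level of~$\xrt_1$ via Stokes' theorem: the vanishing line integrals together with the closedness of~$f$ in~$V$ let one construct the potential~$\phi$ explicitly, and this is the substance of the paper's second proof.
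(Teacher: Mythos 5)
Your proposal is correct and takes essentially the same route as the paper's primary proof: the forward direction via the duality computation $\langle\xrt_1(\der\phi),\varphi\rangle=\langle\phi,\diver(\xrt_1^*\varphi)\rangle=0$, and the converse by noting that vanishing of $\xrt_1 f$ on all lines meeting~$V$ gives $\no_1 f|_V=0$ (the support argument you flag is handled because for $\varphi$ supported in~$V$ the test function $\xrt_1\varphi$ is supported on lines meeting~$V$, while smoothness of $\no_1 f$ on~$V$ comes from lemma~\ref{lma:N1smooth}), after which theorem~\ref{thm:uniquecontinuationofnormaloperator} applies. The Stokes-based alternative you sketch at the end is indeed the paper's second, independent proof in section~\ref{sec:alternativeproof}.
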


\begin{remark}
\label{rmk:spt}
In theorems~\ref{thm:uniquecontinuationofnormaloperator} and~\ref{thm:globalpartialdataresult} the support of the potential~$\phi$ is contained in the convex hull of~$\spt(f)$.
\end{remark}

\begin{remark}
\NTR{We added this remark which shows how one can prove partial data results for certain functions on the sphere bundle S$\R^n$ using partial data results for scalar fields and one-forms.}We can combine the partial data result for vector fields (theorem~\ref{thm:globalpartialdataresult}) with the partial data result for scalar fields (lemma~\ref{lemma:partialdataproblemscalar}) to obtain the following partial data result. Let $F\colon S\R^\dimens\rightarrow\R$ be a function on the sphere bundle~$S\R^\dimens=\R^\dimens\times S^{\dimens-1}$ defined as $F(x, \xi)=g(x)+f(x)\cdot\xi$ where $g\colon\R^\dimens\rightarrow\R$ is a function on~$\R^\dimens$ and~$f$ is a one-form on~$\R^\dimens$. We define the X-ray transform $\xrt_{S\R^\dimens}$ of~$F$ as
\begin{equation}
\xrt_{S\R^\dimens}F(\gamma)=\int_{\R}F(\gamma(t), \dot{\gamma}(t))\der t=\xrt_0 g(\gamma)+\xrt_1 f (\gamma)
\end{equation}
where~$\gamma$ is an oriented line in~$\R^n$ and~$\xrt_0$ is the X-ray transform of scalar fields (see section~\ref{sec:xraytransformscalar}). 

Assume that $V\subset\R^\dimens$ is a nonempty open set such that $g|_V=\der f|_V=0$ and $\xrt_{S\R^\dimens}F(\gamma)=0$ on all lines~$\gamma$ intersecting~$V$. Denote by $\overleftarrow{\gamma}$ the reversed line. Since $\xrt_0 g(\overleftarrow{\gamma})=\xrt_0 g(\gamma)$ and $\xrt_1 f(\overleftarrow{\gamma})=-\xrt_1 f(\gamma)$ we obtain $\xrt_0 g(\gamma)=\frac{1}{2}(\xrt_{S\R^\dimens}F(\gamma)+\xrt_{S\R^\dimens}F(\overleftarrow{\gamma}))$ and $\xrt_1 f(\gamma)=\frac{1}{2}(\xrt_{S\R^\dimens}F(\gamma)-\xrt_{S\R^\dimens}F(\overleftarrow{\gamma}))$. Hence the partial data problem for~$\xrt_{S\R^\dimens}F$ decouples to separate partial data problems for~$\xrt_0 g$ and~$\xrt_1 f$. Using theorem~\ref{thm:globalpartialdataresult} and lemma~\ref{lemma:partialdataproblemscalar} one obtains that $g=0$ and $f=\der\phi$ for some scalar field~$\phi$. This means that $F=\der\phi$, i.e. $F(x, \xi)=\der\phi(x)\cdot\xi$. See~\cite{AD-finsler-scalar-plus-oneform, SU-attenuated-scalar-plus-one-form} for similar results in the case of full data.
\end{remark}

One can view theorems~\ref{thm:uniquecontinuationofnormaloperator} and~\ref{thm:globalpartialdataresult} in terms of the global solenoidal decomposition $f=\sol{f}+\der\phi$ (see section~\ref{subsec:distributionsandsolenoidaldecomposition} and equation~\eqref{eq:globalsolenoidaldecomposition}).
The conclusion $f=\der\phi$ for some $\phi\in\cdistr(\R^\dimens)$ is equivalent to $\sol{f}=0$.

From theorem~\ref{thm:globalpartialdataresult} we obtain the following local partial data result in a bounded domain $\Omega\subset\R^\dimens$.
The X-ray transform of $f\in (L^2(\Omega))^\dimens$ is defined to be $\xrt_1 f:=\xrt_1\widetilde{f}$ where~$\widetilde{f}$ is the zero extension of~$f$ to~$\R^\dimens$.

\begin{theorem}
\label{thm:localpartialdataresult}
Let $f\in (L^2(\Omega)))^\dimens$ where $\Omega\subset\R^\dimens$ is a bounded and smooth convex domain and let $V\subset\Omega$ be some nonempty open set.
Assume that $\der f|_V=0$.
Then $\xrt_1 f=0$ on all lines intersecting~$V$ if and only if $f=\der\phi$ for some $\phi\in H^1_0(\Omega)$.
\end{theorem}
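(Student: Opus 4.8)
The plan is to deduce Theorem~\ref{thm:localpartialdataresult} from the already-established global result (Theorem~\ref{thm:globalpartialdataresult}) by passing from the bounded domain to the whole space via zero extension. Let $\widetilde{f}\in(\cdistr(\R^\dimens))^\dimens$ denote the zero extension of $f$ to $\R^\dimens$; since $f\in(L^2(\Omega))^\dimens$ and $\Omega$ is bounded, $\widetilde{f}$ is a compactly supported covector-valued distribution with $\spt(\widetilde{f})\subset\overline{\Omega}$, so the hypotheses of Theorem~\ref{thm:globalpartialdataresult} make sense. By definition $\xrt_1 f=\xrt_1\widetilde{f}$, and because $V\subset\Omega$, the condition $\der f|_V=0$ is exactly $\der\widetilde{f}|_V=0$ (the extension agrees with $f$ on the interior). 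Applying Theorem~\ref{thm:globalpartialdataresult} to $\widetilde{f}$ then yields, from the vanishing of $\xrt_1\widetilde{f}$ on all lines meeting $V$, that $\widetilde{f}=\der\psi$ for some $\psi\in\cdistr(\R^\dimens)$.

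The remaining work is to upgrade this global potential $\psi$ to a genuine element of $H^1_0(\Omega)$ supported in $\Omega$, and this is where I expect the main obstacle to lie. First I would invoke Remark~\ref{rmk:spt}: the support of $\psi$ is contained in $\conv(\spt(\widetilde{f}))\subset\overline{\Omega}$ because $\Omega$ is convex. Outside $\overline{\Omega}$ we have $\widetilde{f}=0$, so $\der\psi=0$ on the connected unbounded complement $\R^\dimens\setminus\overline{\Omega}$, forcing $\psi$ to be a (distributional) constant there; since $\psi$ has compact support that constant must be zero, confirming $\spt(\psi)\subset\overline{\Omega}$. The more delicate points are regularity and the vanishing trace. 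For regularity, observe that $\der\psi=\widetilde{f}\in(L^2(\R^\dimens))^\dimens$, so the distributional gradient of $\psi$ lies in $L^2$; combined with $\psi\in L^2$ (compact support plus local integrability, which one gets from $\der\psi\in L^2$ and Poincar\'e-type control on the bounded set) this shows $\psi\in H^1(\R^\dimens)$.

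Finally, the membership $\psi\in H^1_0(\Omega)$ requires the trace of $\psi$ on $\partial\Omega$ to vanish. This follows from $\spt(\psi)\subset\overline{\Omega}$ together with $\psi\in H^1(\R^\dimens)$: an $H^1$ function on $\R^\dimens$ that vanishes (a.e.) on the open exterior of the smooth domain $\Omega$ has zero trace on $\partial\Omega$, and hence its restriction to $\Omega$ belongs to $H^1_0(\Omega)$. Here the smoothness of $\partial\Omega$ enters to guarantee a well-defined trace operator and the characterization of $H^1_0(\Omega)$ as the kernel of the trace. The converse direction is immediate: if $f=\der\phi$ with $\phi\in H^1_0(\Omega)$, then its zero extension satisfies $\widetilde{f}=\der\widetilde\phi$ (the zero extension of an $H^1_0$ function is in $H^1(\R^\dimens)$ with gradient the zero extension of $\der\phi$), so $\widetilde f$ is exact and Theorem~\ref{thm:globalpartialdataresult} gives $\xrt_1 f=\xrt_1\widetilde f=0$ on all lines meeting $V$. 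The main obstacle is thus the careful trace argument identifying the compactly supported $H^1$ potential as an element of $H^1_0(\Omega)$ rather than merely $H^1(\Omega)$.
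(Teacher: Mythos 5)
Your proof is correct and follows essentially the same route as the paper's: zero extension, application of theorem~\ref{thm:globalpartialdataresult} to get $\widetilde{f}=\der\phi$ with $\phi\in\cdistr(\R^\dimens)$, support of the potential contained in $\overline{\Omega}$ by convexity, $H^1$ regularity of $\phi$, and finally the identification of a compactly supported $H^1(\R^\dimens)$ function with support in $\overline{\Omega}$ as an element of $H^1_0(\Omega)$ (for which the paper cites \cite[Theorem 3.33]{ML-strongly-elliptic-systems}), with the converse handled exactly as you do via zero extension of $H^1_0$ potentials. The only minor difference is one step: you deduce $\phi\in H^1(\R^\dimens)$ directly from $\der\phi=\widetilde{f}\in(L^2(\R^\dimens))^\dimens$ via a Poincar\'e/mollification argument, whereas the paper invokes elliptic regularity for $\Delta\phi=\diver(\widetilde{f})\in H^{-1}(\R^\dimens)$; both are standard and equally valid here.
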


In terms of the local solenoidal decomposition $f=\sol{f}_{\Omega}+\der\phi_\Omega$ (see section~\ref{subsec:distributionsandsolenoidaldecomposition} and equation~\eqref{eq:poissonequation}) the conclusion $f=\der\phi$ for some $\phi\in H^1_0(\Omega)$ is equivalent to that $\sol{f}_\Omega=0$. 

From theorem~\ref{thm:uniquecontinuationofnormaloperator} we also obtain the following unique continuation and partial data results for the transform~$\xrt_A=\xrt_1\circ A$ where~$A=A(x)$ is smooth invertible matrix field.
We denote by~$\no_A=A^T\circ\no_1\circ A$ the normal operator of~$\xrt_A$. When~$B$ is the constant matrix field $B(v_1, v_2)=(v_2, -v_1)$ where $(v_1, v_2)\in\R^2$ we write $\xrt_B=\xrt_\perp$ and call~$\xrt_\perp$ the transverse ray transform.

\begin{corollary}
\label{cor:ucpofgeneralnormaloperator}
Let $f\in (\cdistr(\R^\dimens))^\dimens$ and $V\subset\R^\dimens$ some nonempty open set. If $\der (Af)|_V=0$ and $\no_A f|_V=0$, then $f=A^{-1}(\der\psi)$ for some $\psi\in\cdistr (\R^\dimens)$.
\end{corollary}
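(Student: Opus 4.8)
The plan is to reduce the statement to theorem~\ref{thm:uniquecontinuationofnormaloperator} via the substitution $g=Af$. Since~$A$ is a smooth matrix field and $f\in(\cdistr(\R^\dimens))^\dimens$, the product $g=Af$ is again a compactly supported covector-valued distribution, so theorem~\ref{thm:uniquecontinuationofnormaloperator} will be applicable to~$g$ once its two hypotheses are verified for~$g$ in place of~$f$.

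First I would check those hypotheses. The condition $\der(Af)|_V=0$ is exactly $\der g|_V=0$. For the second, recall that $\no_A=A^T\circ\no_1\circ A$, so that $\no_A f=A^T(\no_1 g)$. The assumption $\no_A f|_V=0$ therefore reads $A^T(\no_1 g)|_V=0$. Because~$A$ is invertible at every point, $A^T(x)$ is an invertible matrix for each $x\in V$, and we may cancel it to conclude $\no_1 g|_V=0$. Since $\der g|_V=0$, lemma~\ref{lma:N1smooth} guarantees that~$\no_1 g$ is smooth in~$V$; as it vanishes identically on the open set~$V$, it vanishes to infinite order at every point of~$V$, in particular at some $x_0\in V$.

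With both hypotheses in hand, theorem~\ref{thm:uniquecontinuationofnormaloperator} applied to~$g$ yields $g=\der\psi$ for some $\psi\in\cdistr(\R^\dimens)$. Unwinding the substitution gives $Af=\der\psi$, and since $A^{-1}$ exists pointwise and is smooth, we obtain $f=A^{-1}(\der\psi)$, which is the claim. By remark~\ref{rmk:spt} one may moreover take $\spt(\psi)\subset\conv(\spt(g))$, recording the location of the potential.

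There is essentially no hard analytic step here, as the result is a direct corollary of theorem~\ref{thm:uniquecontinuationofnormaloperator}; the only points requiring care are the two bookkeeping facts used above. The first is that multiplication by the smooth field~$A$ preserves the class of compactly supported distributions, so that theorem~\ref{thm:uniquecontinuationofnormaloperator} genuinely applies to~$g$. The second is that pointwise invertibility of~$A^T$ is what lets us pass from $A^T(\no_1 g)|_V=0$ to $\no_1 g|_V=0$; this is the place where the invertibility assumption on~$A$ is actually used.
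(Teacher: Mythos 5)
Your proof is correct and matches the paper's argument: both reduce to theorem~\ref{thm:uniquecontinuationofnormaloperator} applied to $Af$ via $\no_A=A^T\circ\no_1\circ A$ and pointwise invertibility of $A^T$. You spell out two details the paper leaves implicit (that $Af\in(\cdistr(\R^\dimens))^\dimens$ and that $\no_1(Af)|_V=0$ together with lemma~\ref{lma:N1smooth} yields vanishing to infinite order at a point), which is a fine elaboration rather than a different route.
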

 
\begin{corollary}
\label{cor:partialdatageneraltransform}
Let $f\in (\cdistr(\R^\dimens))^\dimens$ and $V\subset\R^\dimens$ some nonempty open set. Assume that $\der (Af)|_V=0$.
Then~$\xrt_A f$ vanishes on all lines intersecting~$V$ if and only if $f=A^{-1}(\der\psi)$ for some $\psi\in\cdistr (\R^\dimens)$.
\end{corollary}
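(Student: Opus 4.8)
The plan is to reduce Corollary~\ref{cor:partialdatageneraltransform} to Theorem~\ref{thm:globalpartialdataresult} by exploiting the definition $\xrt_A = \xrt_1 \circ A$. The key observation is that the data for $\xrt_A f$ is literally the data for $\xrt_1$ applied to the transformed field $Af$. Writing $g = Af$, we have $g \in (\cdistr(\R^\dimens))^\dimens$ since~$A$ is smooth (and multiplication by a smooth matrix preserves compactly supported distributions), and $\xrt_A f(\gamma) = \xrt_1(Af)(\gamma) = \xrt_1 g(\gamma)$ for every line~$\gamma$. Thus $\xrt_A f$ vanishes on all lines intersecting~$V$ if and only if $\xrt_1 g$ does.

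\medskip

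First I would check the hypotheses needed to apply Theorem~\ref{thm:globalpartialdataresult} to~$g$. The assumption $\der(Af)|_V = 0$ is exactly $\der g|_V = 0$, so the closedness hypothesis transfers verbatim. Then Theorem~\ref{thm:globalpartialdataresult} applied to~$g$ states that $\xrt_1 g$ vanishes on all lines meeting~$V$ if and only if $g = \der\psi$ for some $\psi \in \cdistr(\R^\dimens)$. Translating back through $g = Af$, the conclusion $Af = \der\psi$ is equivalent to $f = A^{-1}(\der\psi)$, which is precisely the claimed characterization. The invertibility of~$A$ is what lets me pass freely between~$f$ and~$g$, and since~$A$ is a smooth invertible matrix field, $A^{-1}$ is also smooth, so $f = A^{-1}(\der\psi)$ is again a compactly supported covector-valued distribution. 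This gives both directions of the ``if and only if'' simultaneously, since the biconditional for~$g$ is inherited.

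\medskip

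I expect the proof to be essentially a one-line substitution argument, so the only genuine care required is the functional-analytic bookkeeping: verifying that the map $f \mapsto Af$ is a well-defined bijection of $(\cdistr(\R^\dimens))^\dimens$ onto itself and that it intertwines the two transforms on the relevant class of lines. The main (and rather mild) obstacle is confirming that the composition $\xrt_1 \circ A$ is well-defined on compactly supported distributions — namely that pointwise multiplication by the smooth matrix field~$A$ maps $\cdistr$ to $\cdistr$ componentwise — and that this commutes with the line-integral transform in the sense $\xrt_A = \xrt_1 \circ A$ used above; this should already be part of the setup surrounding the definition of~$\xrt_A$ and~$\no_A$. Once that is in place, the corollary follows immediately from Theorem~\ref{thm:globalpartialdataresult}.
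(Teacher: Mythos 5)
Your proof is correct and is essentially the paper's own argument: the corollary is derived from the identity $\xrt_A=\xrt_1\circ A$ via the substitution $g=Af$, with the paper's stated proof routing through corollary~\ref{cor:ucpofgeneralnormaloperator} (theorem~\ref{thm:uniquecontinuationofnormaloperator} applied to~$Af$) and section~\ref{sec:alternativeproof} explicitly noting your exact reduction, namely applying theorem~\ref{thm:globalpartialdataresult} directly to~$Af$. Your functional-analytic bookkeeping (multiplication by the smooth matrix field~$A$ preserves $(\cdistr(\R^\dimens))^\dimens$, smoothness of~$A^{-1}$, and the transfer of both directions of the biconditional) is sound, so nothing is missing.
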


\NTR{Added references to the articles~\cite{AJM-v-line-transform} and~\cite{DS-tomography}.}In corollaries~\ref{cor:ucpofgeneralnormaloperator} and~\ref{cor:partialdatageneraltransform} the distribution~$\psi\in\cdistr(\R^\dimens)$ is the potential part of the solenoidal decomposition of~$Af\in (\cdistr(\R^\dimens))^\dimens$ and~$\spt(\psi)$ is contained in the convex hull of~$\spt(f)$.
As a special case of the transform~$\xrt_A$ we obtain the next partial data result for the transverse ray transform~$\xrt_\perp$ which is similar to the full data result in~\cite{BH-tomographic-reconstruction-vector-fields, DS-tomography} (see also~\cite{AJM-v-line-transform}).

\begin{corollary}
\label{cor:transverseraytransform}
Let $f\in (\cdistr(\R^2))^2$ and $V\subset\R^2$ some nonempty open set.
Assume that $\diver (f)|_V=0$.
Then~$\xrt_\perp f$ vanishes on all lines intersecting~$V$ if and only if $\diver(f)=0$.

In particular, if $\der f|_V=\diver(f)|_V=0$ and both~$\xrt_1f$ and~$\xrt_\perp f$ vanish on all lines intersecting~$V$, then $f=0$.
\end{corollary}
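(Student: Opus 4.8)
The plan is to read the first equivalence off Corollary~\ref{cor:partialdatageneraltransform} with the matrix field $A=B$, and then to obtain the ``in particular'' claim by combining it with Theorem~\ref{thm:globalpartialdataresult}. I first record the planar identities attached to $B$, which acts by $B(v_1,v_2)=(v_2,-v_1)$. Writing $f=(f_1,f_2)$ gives $Bf=(f_2,-f_1)$, so $\der(Bf)=\partial_1(-f_1)-\partial_2 f_2=-\diver(f)$; hence the hypothesis $\diver(f)|_V=0$ coincides with the assumption $\der(Bf)|_V=0$ needed to apply Corollary~\ref{cor:partialdatageneraltransform}. That corollary yields that $\xrt_\perp f=\xrt_B f$ vanishes on all lines meeting~$V$ if and only if $f=B^{-1}(\der\psi)$ for some $\psi\in\cdistr(\R^2)$, so it remains to identify the condition $f=B^{-1}(\der\psi)$ with $\diver(f)=0$.

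Since $B^{-1}(v_1,v_2)=(-v_2,v_1)$, we have $B^{-1}(\der\psi)=(-\partial_2\psi,\partial_1\psi)$, and the implication $f=B^{-1}(\der\psi)\Rightarrow\diver(f)=0$ is the one-line computation $\diver(-\partial_2\psi,\partial_1\psi)=0$. The converse implication $\diver(f)=0\Rightarrow f=B^{-1}(\der\psi)$ is the step I expect to carry the real content: after applying $B$ it reads ``$Bf$ exact'', while $\diver(f)=0$ is the same as $\der(Bf)=0$, i.e. ``$Bf$ closed'', so I must prove that a closed compactly supported covector-valued distribution on $\R^2$ is exact with a compactly supported potential. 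I would argue through the solenoidal decomposition $Bf=\sol{(Bf)}+\der\psi$: the part $\sol{(Bf)}$ is divergence free by construction, and $\der\sol{(Bf)}=\der(Bf)-\der\der\psi=0$. A distributional vector field on $\R^2$ that is simultaneously divergence free and curl free has harmonic components; being compactly supported, $\sol{(Bf)}$ therefore vanishes, which gives $Bf=\der\psi$ and hence $f=B^{-1}(\der\psi)$. This proves the first assertion, and Remark~\ref{rmk:spt} places $\spt(\psi)$ inside the convex hull of $\spt(f)$.

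For the ``in particular'' statement I would apply the two partial data results simultaneously. From $\der f|_V=0$ and the vanishing of $\xrt_1 f$ on all lines meeting~$V$, Theorem~\ref{thm:globalpartialdataresult} gives $f=\der\phi$ for some $\phi\in\cdistr(\R^2)$; from $\diver(f)|_V=0$ and the vanishing of $\xrt_\perp f$, the equivalence just proved gives $\diver(f)=0$. Then $\Delta\phi=\diver(\der\phi)=\diver(f)=0$, so $\phi$ is a compactly supported harmonic distribution and hence $\phi=0$, whence $f=\der\phi=0$.
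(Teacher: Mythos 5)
Your overall architecture coincides with the paper's: the identity $\der(Bf)=-\diver(f)$, the appeal to corollary~\ref{cor:partialdatageneraltransform} with $A=B$, and the argument for the ``in particular'' claim (theorem~\ref{thm:globalpartialdataresult} gives $f=\der\phi$ with $\phi\in\cdistr(\R^2)$, then $\Delta\phi=\diver(f)=0$ and compact support force $\phi=0$, so $f=0$) are exactly what the paper does. The one place you diverge is the implication $\diver(f)=0\Rightarrow Bf=\der\psi$, and there your argument contains a genuine error: you assert that $\sol{(Bf)}$ is compactly supported and conclude from harmonicity that it vanishes. By the decomposition~\eqref{eq:globalsolenoidaldecomposition} the potential solves $\Delta\psi=\diver(Bf)$ on all of $\R^2$, so $\psi$ and $\sol{(Bf)}=Bf-\der\psi$ are merely tempered distributions that are smooth outside $\spt(f)$ and go to zero at infinity; the Newtonian potential of $\diver(Bf)$ is generically supported on the whole plane, so $\sol{(Bf)}$ has no reason to be compactly supported, and the step as written fails. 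The same oversight leaves you with only a tempered $\psi$, whereas the vanishing $\xrt_1(\der\psi)=0$ via~\eqref{eq:xraytransformofpotential} is justified in the paper only for potentials in $\cdistr(\R^\dimens)$.

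The gap is local and repairable. Your observation that $\sol{(Bf)}$ is simultaneously divergence free and curl free does make its components harmonic; a harmonic tempered distribution is a polynomial, and since the solenoidal part tends to zero at infinity it must vanish, giving $Bf=\der\psi$. Then $\der\psi$ is compactly supported, so $\psi$ is constant on the unbounded component of $\spt(Bf)^c$, and decay forces that constant to be zero, so $\psi\in\cdistr(\R^2)$ after all. But this entire repaired detour is precisely the content of the distributional Poincar\'e lemma with compact supports, lemma~\ref{lemma:poincarelemma}, which the paper invokes directly at this point: $\der(Bf)=0$ and $Bf\in(\cdistr(\R^2))^2$ immediately give $Bf=\der\eta$ with $\eta\in\cdistr(\R^2)$, hence $\xrt_\perp f=\xrt_1(\der\eta)=0$. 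Citing that lemma, which is already on the shelf, eliminates both the false support claim and the need for the solenoidal decomposition.
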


Alternatively, one can conclude in the first claim of corollary~\ref{cor:transverseraytransform} that $f=\curl(\psi)$ for some $\psi\in\cdistr(\R^\dimens)$ where $\curl(\psi)=(\partial_2\psi, -\partial_1\psi)$. In terms of the global solenoidal decomposition this is equivalent to that $f=\sol{f}$. Also in the latter claim it is enough to know the partial data of~$\xrt_1 f$ for~$V\subset\R^2$ and the partial data of~$\xrt_\perp f$ for
~$W\subset\R^2$ where~$V$ and~$W$ can be disjoint.  

\begin{remark}
Some of the results above can be slightly generalized.
Using the same proof as in theorem~\ref{thm:localpartialdataresult} one can show that corollaries~\ref{cor:partialdatageneraltransform} and~\ref{cor:transverseraytransform} hold also in the local case when $f\in (L^2(B))^\dimens$.
Also in corollary~\ref{cor:ucpofgeneralnormaloperator} one can replace the condition $\no_Af|_V=0$ with the requirement that~$\no_Af$ vanishes to infinite order at $x_0\in V$ when~$A$ is a constant matrix field.
Especially, this holds for the normal operator of the transverse ray transform.
One can also see from theorem~\ref{thm:globalpartialdataresult} and corollary~\ref{cor:transverseraytransform} that the X-ray transform and the transverse ray transform provide complementary information about the one-form in~$\R^2$.
\end{remark}

We note that if~$A=A(x)$ is not invertible for all $x\in\R^\dimens$, we can still conclude in corollary~\ref{cor:partialdatageneraltransform} that $Af=\der\psi$ for some potential $\psi\in\cdistr(\R^\dimens)$.
Thus we obtain the ``pointwise projection"~$Af$ modulo potentials from the local data for~$\xrt_A f$.
We also remark that in all of our results which consider the X-ray transform in~$\R^\dimens$ we could replace the assumption of compact support with rapid decay at infinity.
If all the derivatives of the matrix field $A=A_{ij}(x)$ grow at most polynomially, then the results are true for one-forms whose\NTR{Changed `which' to `whose'.} components are Schwartz functions.
This follows since the corresponding partial data result for scalar fields holds for Schwartz functions~\cite{IM-unique-continuation-riesz-potential} and our method of proof is based on reducing the problem of one-forms to the problem of scalar fields.

\subsection{Related results}
\label{sec:relatedresults}
Similar partial data results as in theorems~\ref{thm:globalpartialdataresult} and~\ref{thm:localpartialdataresult} are previously known for scalar fields. If one knows the values of the scalar function~$f$ in an open set~$V$, then one can uniquely reconstruct~$f$ from its local X-ray data \cite{CNDK-solving-interior-problem-ct-with-apriori-knowledge, IM-unique-continuation-riesz-potential, KEQ-wavelet-methods-ROI-tomography}. In~$\R^2$ uniqueness is also obtained under weaker assumptions: if~$f$ is piecewise constant, piecewise polynomial or analytic in~$V$, then one can recover~$f$ uniquely from its integrals over the lines going through~$V$ \cite{KKW-stability-of-interior-problems, KEQ-wavelet-methods-ROI-tomography, YYJW-high-order-TV-minimization}. A complementary partial data result is the Helgason support theorem~\cite{HE:integral-geometry-radon-transforms}. According to Helgason's theorem, if $f|_C=0$ and the integrals of~$f$ vanish on all lines not intersecting a compact and convex set~$C$, then $f=0$. 

The normal operator of the X-ray transform of scalar fields admits a similar unique continuation property as in theorem~\ref{thm:uniquecontinuationofnormaloperator}. If~$f$ is a function which satisfies $f|_V=0$ and
~$\no_0 f$ vanishes to infinite order at some point in~$V$, then $f=0$~\cite{IM-unique-continuation-riesz-potential}. This is a special case of a more general unique continuation result for Riesz potentials~\cite{IM-unique-continuation-riesz-potential} (see equation~\eqref{eq:normaloperatorofscalarfield}). Unique continuation of Riesz potentials is related to unique continuation of fractional Laplacians~\cite{CMR-ucp-higher-order-laplacians, GSU-calderon-problem-fractional-schrodinger, IM-unique-continuation-riesz-potential} (see also equation~\eqref{eq:invertingnormaloperatorscalar}).

\NTR{We explained the support theorem for~$\xrt_1$ in more detail. We corrected the additional assumption $f|_C=0$ to $\der f|_C=0$.}
Unique reconstruction of the solenoidal part of a one-form or vector field with full data is known in~$\R^\dimens$ \cite{JUH-principles-of-doppler-tomography, NO-tomographic-recostruction-of-vector-fields, SSLP-doppler-tomography-vector-fields, SU:microlocal-analysis-integral-geometry} and on compact simple Riemannian manifolds with boundary~\cite{IM:integral-geometry-review, PSU-tensor-tomography-progress}.
In~$\R^\dimens$ uniqueness holds for compactly supported covector-valued distributions as well
~\cite{SHA-integral-geometry-tensor-fields}.
Some partial data results are known for one-forms.
The solenoidal part can be reconstructed by knowing
~$\xrt_1 f$ on all lines parallel to a finite set of planes~\cite{JUH-principles-of-doppler-tomography, SCHU-3d-doppler-transform-reconstruction-and-kernels, SHA-vector-tomography-incomplete-data}.
When $n\geq 3$, one can locally recover one-forms up to potential fields near a strictly convex boundary point~\cite{SUV-local-invertibility-on-tensors}, and the solenoidal part can be determined from the knowledge of~$\xrt_1 f$ on all lines intersecting a certain type of curve~\cite{VER-integral-geometry-symmetric-tensor-incomplete} (see also~\cite{DEN-inversion-of-3d-tensor-fields}).
\NTR{We replaced ``...the solenoidal part can be locally recovered near a strictly convex boundary point..." with ``one can locally recover one-forms up to potential fields near a strictly convex boundary point". We continued this sentence as ``...and the solenoidal part can be determined..." instead of ``...or...".}
One can also obtain information about the singularities of the curl of a compactly supported covector-valued distribution from its X-ray data on lines intersecting a fixed curve~\cite{RA-microlocal-analysis-doppler-transform}. There is a Helgason-type support theorem for the X-ray transform of one-forms which is in a sense complementary to our result. If~$f$ integrates to zero over all lines not intersecting a compact and convex set~$C$, then $\der f=0$ in the complement of~$C$ \cite[Theorem 7.5]{SU:microlocal-analysis-integral-geometry}. If we further assume that $\der f|_C=0$, then the one-form~$f$ is closed in the whole space which implies that~$f$ is exact and the solenoidal part of~$f$ vanishes. See also the discussion after the alternative proof in section~\ref{sec:alternativeproof}. 

\NTR{Added references to the articles~\cite{AJM-v-line-transform} and~\cite{DS-tomography}.}The transverse ray transform has been studied earlier with full data in~$\R^2$~\cite{BH-tomographic-reconstruction-vector-fields, DS-tomography, NA-mathematical-methods-image-reconstruction} and also on Riemannian manifolds~\cite{IMR-mixing-ray-transforms, SHA-integral-geometry-tensor-fields} (see also~\cite{ABH-support-theorem-transverse-ray} for a support theorem). The transverse ray transform is a special case of a more general mixed ray transform~\cite{dESUZ-generic-uniqueness-mixed-ray, deSZ-mixed-ray, DS-tomography, SHA-integral-geometry-tensor-fields}.
In higher dimensions the transverse ray transform is related to the normal Radon transform~\cite{SS-vector-field-overview, STR-reconstruction-from-doppler-radon-tranforms}.
In
~$\R^2$ and on certain Riemannian manifolds the knowledge of~$\xrt_1 f$ and~$\xrt_\perp f$ fully determines the one-form~\cite{BH-tomographic-reconstruction-vector-fields, DS-tomography, IMR-mixing-ray-transforms}.
By theorem~\ref{thm:globalpartialdataresult} and corollary~\ref{cor:transverseraytransform} this is true in~$\R^2$ also in the case of partial data.
In higher dimensions~$f$ is determined by~$\xrt_1 f$ and the normal Radon transform of~$f$
~\cite{STR-reconstruction-from-doppler-radon-tranforms}.
A similar transform to~$\xrt_A$ was studied in~\cite{IMR-mixing-ray-transforms, PRI-inner-product-probes}. Recently in~\cite{AJM-v-line-transform} the authors studied the so-called V-line transform of vector fields which is a generalization of the X-ray transform to V-shaped ``lines" which consist of one vertex and two rays (half-lines).

\subsection*{Acknowledgements}
J.I. was supported by the Academy of Finland (grants 332890 and 336254).\NTR{Added new grant.}
K.M. was supported by Academy of Finland (Centre of Excellence in Inverse Modelling and Imaging, grant numbers 284715 and 309963).
We are grateful to Lauri Oksanen for discussions.
The authors want to thank the anonymous referees for their valuable comments.
\NTR{We added our thanks to the referees. Thank you!}

\section{Preliminaries}
\label{sec:preliminaries}
In this section we give a brief introduction to the theory of X-ray tomography of scalar fields and one-forms in~$\R^\dimens$. We also define the generalized X-ray transform of one-forms.
First we recall the definition and solenoidal decomposition of covector-valued distributions. 
We mainly follow the conventions of~\cite{deRham-differentiable-manifolds, HO-topological-vector-spaces, NA-mathematics-computerized-tomography, SHA-integral-geometry-tensor-fields, SU:microlocal-analysis-integral-geometry, TRE:topological-vector-spaces-distributions} and refer the reader to them for further details.

\subsection{Covector-valued distributions and solenoidal decomposition}
\label{subsec:distributionsandsolenoidaldecomposition}
We denote by~$\csmooth(\R^\dimens)$ the space of compactly supported smooth functions, by~$\schwartz(\R^\dimens)$ the space of rapidly decreasing smooth functions (Schwartz space) and by~$\smooth (\R^\dimens)$ the space of smooth functions. All spaces are equipped with their standard topologies. The spaces~$\distr (\R^\dimens)$, $\tempered(\R^\dimens)$ and~$\cdistr(\R^\dimens)$ are the corresponding topological duals. Elements of~$\distr(\R^\dimens)$ are called distributions and~$\cdistr(\R^\dimens)$ can be seen as the space of compactly supported distributions. We have the continuous inclusions $\cdistr(\R^\dimens)\subset\tempered(\R^\dimens)\subset\distr(\R^\dimens)$.
We write the dual pairing as $\ip{f}{\varphi}$ when~$f$ is a distribution and~$\varphi$ is a test function.
%$f\in\distr (\R^\dimens)$ and $\varphi$ is a test function.

We define the vector-valued test function space~$(\csmooth(\R^\dimens))^\dimens$ such that $\varphi\in (\csmooth(\R^\dimens))^\dimens$ if and only if $\varphi=(\varphi_1, \dotso, \varphi_\dimens)$ and $\varphi_i\in\csmooth(\R^\dimens)$ for all $i=1, \dotso, \dimens$. The topology of the space~$(\csmooth(\R^\dimens))^\dimens$ is defined as follows: a sequence~$\varphi_k$ converges to zero in~$(\csmooth(\R^\dimens))^\dimens$ if and only if
~$(\varphi_k)_i$ converges to zero in
~$\csmooth(\R^\dimens)$ for all $i=1, \dotso, \dimens$. We then define the space of covector-valued distributions~$(\distr(\R^\dimens))^\dimens$ so that $f\in(\distr(\R^\dimens))^\dimens$ if and only if $f=(f_1, \dotso, f_n)$ and $f_i\in\distr (\R^\dimens)$ for all $i=1, \dotso , \dimens$. The duality pairing of $f\in (\distr(\R^\dimens))^\dimens$ and $\varphi\in (\csmooth(\R^\dimens))^\dimens$ becomes
\begin{equation}
\ip{f}{\varphi}=\sum_{i=1}^\dimens\ip{f_i}{\varphi_i}.
\end{equation}
The spaces~$(\smooth(\R^\dimens))^\dimens$, $(\schwartz(\R^\dimens))^\dimens$, $(\cdistr(\R^\dimens))^\dimens$ and~$(\tempered(\R^\dimens))^\dimens$ are defined in a similar way and we call~$(\cdistr(\R^\dimens))^\dimens$ the space of compactly supported covector-valued distributions. Covector-valued distributions are a special case of currents which are continuous linear functionals in the space of differential forms~\cite[Section III]{deRham-differentiable-manifolds}. The components of the exterior derivative or the curl of a one-form or covector-valued distribution are
\begin{equation}
(\der f)_{ij}=\partial_i f_j-\partial_j f_i.
\end{equation}

One can split certain covector-valued distributions into a divergence-free part and a potential part. If $f\in(\cdistr(\R^\dimens))^\dimens$, then we have the unique decomposition~\cite{SHA-integral-geometry-tensor-fields}
\begin{equation}
\label{eq:globalsolenoidaldecomposition}
f=\sol{f}+\der\phi, \quad \diver(\sol{f})=0
\end{equation}
where $\phi\in\tempered(\R^\dimens)$ and $\sol{f}\in(\tempered(\R^\dimens))^\dimens$ are smooth outside~$\spt(f)$ and go to zero at infinity. Here~$\phi$ is defined so that it solves the equation $\Delta\phi=\diver(f)$ in the sense of distributions and $\sol{f}=f-\der\phi$. The decomposition~\eqref{eq:globalsolenoidaldecomposition} is known as solenoidal decomposition or Helmholtz decomposition and it holds also for $f\in (\schwartz(\R^\dimens))^\dimens$~\cite{SHA-integral-geometry-tensor-fields}. We call~$f$ solenoidal if $\diver(f)=0$. For the decomposition~\eqref{eq:globalsolenoidaldecomposition} this means that $f=\sol{f}$.

If~$f$ is supported in a fixed set, we can do the decomposition locally in that set. If $\Omega\subset\R^\dimens$ is a regular enough bounded domain and $f\in (L^2(\Omega))^\dimens$, we let~$\phi_{\Omega}$ to be the unique weak solution to the Poisson equation 
\begin{equation}
\label{eq:poissonequation}
\begin{cases}
\Delta\phi=\diver(f) \ \text{in} \ \Omega \\
\phi\in H^1_0(\Omega).
\end{cases}
\end{equation}
Then we have $f=\sol{f}_{\Omega}+\der\phi_{\Omega}$ where $\sol{f}_{\Omega}=f-\der\phi_{\Omega}\in (L^2(\Omega))^\dimens$ and $\diver(\sol{f}_{\Omega})=0$. If $f\in (C^{1, \alpha}(\overline{\Omega}))^\dimens$ for some $0<\alpha<1$, then there is unique classical solution $\phi_{\Omega}\in C^{2, \alpha}(\overline{\Omega})$ to the boundary value problem~\eqref{eq:poissonequation} and the solenoidal decomposition holds pointwise~\cite{GT-elliptic-pdes}.

\subsection{The X-ray transform of scalar fields}
\label{sec:xraytransformscalar}

Let~$\Gamma$ be the set of all oriented lines in~$\R^\dimens$.
The X-ray transform of a function~$f$ is defined as
\begin{equation}
\xrt_0 f(\gamma)=\int_{\gamma}f\der s, \quad \gamma\in\Gamma
\end{equation}
whenever the integrals exist.
The set~$\Gamma$ can be parameterized as
\begin{equation}
\label{eq:parametrizationoflines}
\Gamma=\{(z, \theta): \theta\in S^{\dimens-1}, \ z\in\theta^{\perp}\}.
\end{equation}
Then the X-ray transform becomes
\begin{equation}
\xrt_0 f(z, \theta)=\int_{\R}f(z+t\theta)\der t
\end{equation}
and it is a continuous map $\xrt_0\colon\csmooth(\R^\dimens)\rightarrow\csmooth(\Gamma)$. One can define the adjoint using the formula
\begin{equation}
\xrt_0^*\psi(x)=\int_{S^{\dimens-1}}\psi(x-(x\cdot\theta)\theta, \theta)\der\theta
\end{equation}
and it follows that $\xrt_0^*\colon\smooth(\Gamma)\rightarrow\smooth(\R^\dimens)$ is continuous. By duality we can define $\xrt_0\colon\cdistr(\R^\dimens)\rightarrow\cdistr(\Gamma)$ and $\xrt_0^*\colon\distr(\Gamma)\rightarrow\distr(\R^\dimens)$ as
\begin{align}
\langle \xrt_0 f, \varphi\rangle&=\langle f, \xrt_0^*\varphi\rangle \\
\langle \xrt_0^*g, \eta\rangle&=\langle g, \xrt_0\eta\rangle.
\end{align}

\NTR{We wrote an integral formula for $\no_0f(x)$ so that the notation for convolution can be understood. We also replaced ``By duality the formula \eqref{eq:normaloperatorofscalarfield}..." with ``By duality the formula $\no_0f=2(f\ast\abs{\cdot}^{1-n})$...".}
The normal operator $\no_0=\xrt_0^*\xrt_0$ is useful in studying the properties of the X-ray transform since it takes functions on~$\R^\dimens$ to functions on~$\R^\dimens$. It has an expression
\begin{equation}
\label{eq:normaloperatorofscalarfield}
\no_0 f(x)
=
2\int_{\R^\dimens}\frac{f(y)}{\abs{x-y}^{\dimens-1}}\der y
=
2(f\ast\abs{\cdot}^{1-\dimens})(x)
\end{equation}
for continuous functions~$f$ decreasing rapidly enough at infinity. By duality the formula $\no_0f=2(f\ast\abs{\cdot}^{1-n})$ holds also for compactly supported distributions and the normal operator becomes a map $\no_0\colon\cdistr(\R^\dimens)\rightarrow\distr(\R^\dimens)$. One can invert~$f$ from its X-ray transform using the normal operator by
\begin{equation}
\label{eq:invertingnormaloperatorscalar}
f=c_{0, \dimens}(-\Delta)^{1/2}\no_0f,
\end{equation}
where $c_{0, \dimens}=(2\pi\abs{S^{n-2}})^{-1}$ is a constant depending on the dimension and $(-\Delta)^{1/2}$ is the fractional Laplacian of order~$1/2$.
The inversion formula~\eqref{eq:invertingnormaloperatorscalar} holds for $f\in\cdistr(\R^\dimens)$ and for continuous functions~$f$ decreasing rapidly enough at infinity.

\subsection{The X-ray transform of one-forms}

\NTR{We explained what the formula~\eqref{eq:xraytransformofvectorfield} means. We also replaced ``...parametrization~\eqref{eq:parametrizationoflines} for $\Gamma$ we obtain..." with ``...parametrization~\eqref{eq:parametrizationoflines} for~$\Gamma$ we \textbf{can write}...".}
Let~$f$ be a one-form on~$\R^\dimens$.
We define its X-ray transform as
\begin{equation}
\label{eq:xraytransformofvectorfield}
\xrt_1 f(\gamma)=\int_{\gamma}f, \quad \gamma\in\Gamma
\end{equation}
whenever the integrals exist. The formula~\eqref{eq:xraytransformofvectorfield} is understood as the integral of the one-form~$f$ over the (oriented) one-dimensional submanifold~$\gamma$. Using the parametrization~\eqref{eq:parametrizationoflines} for~$\Gamma$ we can write
\begin{equation}
\xrt_1 f(z, \theta)=\int_{\R}f(z+t\theta)\cdot\theta\der t.
\end{equation}
It follows that $\xrt_1\colon(\csmooth(\R^\dimens))^\dimens\rightarrow\csmooth(\Gamma)$ is continuous. The adjoint is defined as
\begin{equation}
(\xrt_1^*\psi)_i(x)=\int_{S^{\dimens-1}}\theta_i \psi(x-(x\cdot\theta)\theta, \theta)\der\theta
\end{equation}
and $\xrt_1^*\colon\smooth(\Gamma)\rightarrow(\smooth(\R^\dimens))^\dimens$ is also continuous. Thus we can define $\xrt_1\colon(\cdistr(\R^\dimens))^\dimens\rightarrow\cdistr(\Gamma)$ and $\xrt_1^*\colon\distr(\Gamma)\rightarrow(\distr(\R^\dimens))^\dimens$ as
\begin{align}
\ip{\xrt_1 f}{\varphi}&=\ip{f}{\xrt_1^*\varphi} \\
\ip{\xrt_1^* g}{\eta}&=\ip{g}{\xrt_1\eta}.
\end{align}
If $f\in (L^p(\Omega))^\dimens$ where $\Omega\subset\R^\dimens$ is a bounded domain and $p\geq 1$, we define its X-ray transform as $\xrt_1 f:=\xrt_1\widetilde{f}$ where $\widetilde{f}\in (\cdistr(\R^\dimens))^\dimens$ is the zero extension of~$f$.

Like in the scalar case we define the normal operator $\no_1=\xrt_1^*\xrt_1$ and it satisfies the formula
\begin{equation}
\label{eq:normaloperatorvectorfield}
(\no_1 f)_i=\sum_{j=1}^\dimens\frac{2x_ix_j}{\abs{x}^{\dimens+1}}\ast f_j.
\end{equation}
The normal operator can be extended to a map $\no_1\colon (\cdistr(\R^\dimens))^\dimens\rightarrow (\distr(\R^\dimens))^\dimens$ and the formula~\eqref{eq:normaloperatorvectorfield} holds for $f\in (\cdistr(\R^\dimens))^\dimens$ and also for continuous one-forms decreasing rapidly enough at infinity. One can invert the solenoidal part of~$f$ using the normal operator by
\begin{equation}
\label{eq:invertingsolenoidalpartfromnormaloperator}
\sol{f}=c_{1, \dimens}(-\Delta)^{1/2}\no_1 f,
\end{equation}
where $c_{1, \dimens}=\abs{S^n}$ is a constant depending on the dimension and $(-\Delta)^{1/2}$ operates componentwise. The formula~\eqref{eq:invertingsolenoidalpartfromnormaloperator} holds for $f\in(\cdistr(\R^\dimens))^\dimens$ and also for continuous one-forms decreasing rapidly enough at infinity.

\subsection{The generalized X-ray transform of one-forms}
Let $A=A(x)$ be a smooth matrix-valued function on~$\R^\dimens$ such that for each $x\in\R^\dimens$ the matrix~$A(x)$ is invertible. We define the transform~$\xrt_A$ of a one-form~$f$ as
\begin{equation}
\xrt_Af(\gamma)
=
\int_{-\infty}^{\infty}A(\gamma(t))f(\gamma(t))\cdot\dot{\gamma}(t)\der t=\xrt_1(A f)(\gamma), \quad \gamma\in\Gamma.
\end{equation}
Thus~$\xrt_A$ can be seen as the X-ray transform of the ``rotated" one-form~$Af$. The transform~$\xrt_A$ can also be defined on compactly supported covector-valued distributions. We first let $\ip{A f}{\varphi}=\ip{f}{A^T\varphi}$ for $f\in (\distr(\R^\dimens))^\dimens$ and a test function~$\varphi$ where~$A^T$ is the pointwise transpose of~$A$ and $(A^T\varphi)(x)=A^T(x)\varphi(x)$. Then clearly~$A$ is a map $A\colon (\cdistr(\R^\dimens))^\dimens\rightarrow (\cdistr(\R^\dimens))^\dimens$. Therefore we can define~$\xrt_A\colon (\cdistr(\R^\dimens))^\dimens\rightarrow \distr(\Gamma)$ as $\xrt_A f=\xrt_1(Af)$. One easily sees that the adjoint is $\xrt^*_A=A
^T\circ\xrt^*_1$ and the normal operator becomes $\no_A=A^T\circ\no_1\circ A$. By the discussion above the normal operator can be extended to a map $\no_A\colon (\cdistr(\R^\dimens))^\dimens\rightarrow (\distr(\R^\dimens))^\dimens$.

\NTR{Added reference to the article~\cite{DS-tomography}.}Let~$B$ be the constant matrix field on~$\R^2$ defined as $B(v_1e_1+v_2e_2)=v_2e_1-v_1e_2$ where $\{e_1, e_2\}$ is any orthonormal basis of~$\R^2$. The matrix~$B$ corresponds to a clockwise rotation by 90 degrees. We then define the transverse ray transform~$\xrt_\perp$ by letting $\xrt_\perp=\xrt_B$. It follows that the transverse ray transform provides complementary information about the solenoidal decomposition compared to the X-ray transform, i.e.~$\xrt_1$ determines the solenoidal part and~$\xrt_\perp$ determines the potential part of a one-form~\cite{BH-tomographic-reconstruction-vector-fields, DS-tomography} (see also theorem~\ref{thm:globalpartialdataresult} and corollary~\ref{cor:transverseraytransform}). 

\section{Proofs of the main results}
\label{sec:proofsofthemainresults}

We give two alternative proofs for the partial data results. The first proof uses the unique continuation of the normal operator and the second proof works directly at the level of the X-ray transform. Both proofs are based on the corresponding results for scalar fields.

\subsection{Proofs using the unique continuation of the normal operator}
\label{subsec:proofsusingucp}

In this section we prove our main results using the unique continuation property of the normal operator.
We need the following lemmas in our proofs.

\begin{lemma}[{\cite[Theorem 1.1]{IM-unique-continuation-riesz-potential}}]
\label{lemma:ucpofscalarnormaloperaor}
Let $V\subset\R^\dimens$ be some nonempty open set and $g\in\cdistr(\R^\dimens)$. If $g|_V=0$ and $\partial^{\beta}(\no_0 g)(x_0)=0$ for some $x_0\in V$ and all $\beta\in\N^\dimens$, then $g=0$.
\end{lemma}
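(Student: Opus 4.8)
Since this is the scalar statement quoted from~\cite{IM-unique-continuation-riesz-potential}, the plan is to reduce it to unique continuation for the half-Laplacian. First I would identify $\no_0 g=2(g\ast\abs{\cdot}^{1-\dimens})$ with a constant multiple of the Riesz potential $\riesz g$ of order $\alpha=1$, whose Fourier multiplier is $\abs{\xi}^{-1}$. Writing $u:=\no_0 g$, the inversion formula~\eqref{eq:invertingnormaloperatorscalar} gives $(-\Delta)^{1/2}u=c_{0,\dimens}^{-1}g$ in the sense of distributions. The hypothesis $g|_V=0$ then becomes $(-\Delta)^{1/2}u|_V=0$, while the infinite-order vanishing of $\no_0 g$ at $x_0$ is exactly infinite-order vanishing of $u$ at $x_0\in V$. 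Compact support of $g$ makes $\hat g$ entire by Paley--Wiener, and $u=\no_0 g$ is smooth on the open set where $g$ vanishes, so all pointwise statements are well defined.

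The core is a strong unique continuation argument via the Caffarelli--Silvestre extension. For the exponent $s=1/2$ the extension weight $y^{1-2s}$ is trivial, so the relevant object is simply the harmonic extension $U$ of $u$ to the half-space $\R^{\dimens+1}_{+}$: one has $\Delta_{x,y}U=0$, Dirichlet trace $U(\cdot,0)=u$, and Neumann trace $-\partial_y U(\cdot,0)$ proportional to $(-\Delta)^{1/2}u=c_{0,\dimens}^{-1}g$. Over $V$ the Neumann trace vanishes, so I would reflect $U$ evenly across $V\times\{0\}$ to obtain a function harmonic in a full neighborhood of $(x_0,0)$ in $\R^{\dimens+1}$.

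Harmonic functions are real-analytic, so it remains to check that $U$ vanishes to infinite order at $(x_0,0)$. The tangential derivatives $\partial_x^{\beta}U(x_0,0)=\partial^{\beta}u(x_0)$ all vanish by hypothesis; even symmetry kills the odd normal derivatives; and the equation $\partial_y^2 U=-\Delta_x U$ propagates the vanishing to the even normal derivatives by induction. Real-analyticity then forces $U\equiv 0$ near $(x_0,0)$, hence $u=0$ on a nonempty open set $W\subseteq V$. On $W$ both $u=0$ and $(-\Delta)^{1/2}u=c_{0,\dimens}^{-1}g=0$, so the weak unique continuation property for $(-\Delta)^{1/2}$ (the Ghosh--Salo--Uhlmann result) yields $u\equiv 0$ on all of $\R^{\dimens}$, and therefore $g=c_{0,\dimens}^{-1}(-\Delta)^{1/2}u=0$.

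The main obstacle is the nonlocality of the fractional Laplacian: knowing $u=\no_0 g$ on an open neighborhood of $x_0$ does not by itself give $g=0$ there, which is why the argument genuinely needs the global weak continuation theorem as a final input rather than a purely local conclusion. The reflection/analyticity step is what upgrades the pointwise infinite-order vanishing to vanishing on an open set; making it rigorous at the level of compactly supported distributions (trace regularity of $U$ up to $V\times\{0\}$, and justification of the even extension) is routine once one uses that $\no_0 g$ is smooth off $\spt(g)$, whereas the passage from open-set vanishing to global vanishing is the substantive, citable ingredient.
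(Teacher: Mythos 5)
This lemma is not proved in the paper at all: it is imported verbatim from \cite[Theorem 1.1]{IM-unique-continuation-riesz-potential}, where it is a special case of a unique continuation theorem for Riesz potentials proved by direct analysis of the kernel. Your route through the Caffarelli--Silvestre extension is the alternative that section~\ref{subsec:proofsusingucp} only gestures at (``one can alternatively use the unique continuation of the fractional Laplacian \cite{GSU-calderon-problem-fractional-schrodinger}''), so you are on a genuinely different, and in principle viable, path. Your local step is correct: with $u=\no_0g$ and $g|_V=0$, the Neumann trace of the harmonic extension vanishes on $V$, even reflection plus the induction $\partial_y^2U=-\Delta_xU$ propagates the infinite-order vanishing to $(x_0,0)$, and analyticity of harmonic functions gives $u=0$ on an open set $W\subset V$. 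Two simplifications are worth noting. First, $u$ is already real-analytic on $\R^\dimens\setminus\spt(g)$ (pair $g$ against the analytic kernel $\abs{x-\cdot}^{1-\dimens}$), so no extension is needed to reach $u|_W=0$. Second, $U(x,y)=\ip{g}{2(\abs{x-\cdot}^2+y^2)^{(1-\dimens)/2}}$ is an \emph{explicit} harmonic, $y$-even, real-analytic function on $\R^{\dimens+1}\setminus(\spt(g)\times\{0\})$, which is a connected set; your infinite-order vanishing at $(x_0,0)$ then propagates to $U\equiv0$ on that whole set, in particular on the upper half-space, whence $\widehat{u}(\xi)e^{-y\abs{\xi}}=0$, $u\equiv0$ and $g=0$ --- closing the proof with no fractional UCP input at all.

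The genuine gap is the final invocation of \cite{GSU-calderon-problem-fractional-schrodinger}: their unique continuation theorem requires $u\in H^r(\R^\dimens)$ for some $r\in\R$, and you never verify this; it is in fact false in general when $\dimens=2$. Indeed $\widehat{u}(\xi)=c\abs{\xi}^{-1}\widehat{g}(\xi)$, and if $\widehat{g}(0)=\ip{g}{1}\neq0$ then $\abs{\xi}^{-2}$ fails to be integrable near the origin in the plane, so $\no_0g$ lies in no Sobolev space and the theorem as stated does not apply --- and you cannot rule out $\ip{g}{1}\neq0$ before the proof is finished. Since the lemma includes $\dimens=2$, the argument as written breaks there (for $\dimens\geq3$ the membership holds, but the low-frequency check should still be recorded). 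A standard repair: apply the UCP to $\partial_ju$, whose Fourier transform $c\,i\xi_j\abs{\xi}^{-1}\widehat{g}$ is locally bounded, hence $\partial_ju\in H^r$ for sufficiently negative $r$; on $W$ one has $\partial_ju=0$ and $(-\Delta)^{1/2}\partial_ju=c\,\partial_jg=0$, so $\nabla u\equiv0$, $u$ is constant, and the decay of $u(x)=\ip{g}{2\abs{x-\cdot}^{1-\dimens}}$ at infinity forces $u\equiv0$, then $g=c_{0,\dimens}(-\Delta)^{1/2}u=0$ by \eqref{eq:invertingnormaloperatorscalar} (note your final line inverts this constant --- a harmless slip). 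Two smaller points: the even reflection needs $U$ smooth up to $V\times\{0\}$, which is immediate from the explicit kernel but requires a short elliptic-regularity argument for the abstract Poisson extension of a tempered distribution; and the Neumann-trace identity $-\partial_yU(\cdot,0)=(-\Delta)^{1/2}u$ should likewise be justified in this distributional setting.
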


\begin{lemma}[Poincar\'e lemma]
\label{lemma:poincarelemma}
Let $g\in(\distr (\R^\dimens))^
\dimens$ such that $\der g=0$.
Then there is $\eta\in\distr (\R^\dimens)$ such that $\der\eta=g$.
If $g\in(\cdistr (\R^\dimens))^\dimens$, then $\eta\in\cdistr (\R^\dimens)$.
%If~$g$ is smooth, then so is~$\eta$.
\end{lemma}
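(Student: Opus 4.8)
The plan is to prove the two assertions in turn: first produce a distributional potential on all of~$\R^\dimens$ using contractibility, and then, when~$g$ is compactly supported, upgrade the potential to a compactly supported one.

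For the first assertion I would invoke the Poincar\'e lemma for currents on the contractible space~$\R^\dimens$ \cite{deRham-differentiable-manifolds}: a closed current of positive degree is exact, and a covector-valued distribution is exactly a $1$-current, so there is $\eta\in\distr(\R^\dimens)$ with $\der\eta=g$. If one prefers a hands-on argument, mollify by a standard approximation to the identity~$\rho_\eps$ and observe that $\der(g\ast\rho_\eps)=(\der g)\ast\rho_\eps=0$, so the smooth homotopy formula produces smooth potentials~$\eta_\eps$ with $\der\eta_\eps=g\ast\rho_\eps$; after pinning down the additive constant (say by fixing the average over a fixed ball) these converge in~$\distr(\R^\dimens)$ to the desired~$\eta$. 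This first step needs no restriction on the dimension.

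The compact-support assertion is the crux, and this is where $\dimens\geq 2$ enters. Here I would argue by the Fourier transform and the Paley--Wiener--Schwartz theorem. Since $g\in(\cdistr(\R^\dimens))^\dimens$, each~$\hat g_j$ is an entire function of exponential type, and the hypothesis $\der g=0$ reads $\xi_i\hat g_j=\xi_j\hat g_i$ for all $i,j$. On the hyperplane $\{\xi_j=0\}$ this forces $\xi_i\hat g_j=0$ for every~$i$, so~$\hat g_j$ vanishes there away from the origin and hence, by continuity, on all of $\{\xi_j=0\}$; this is exactly where $\dimens\geq 2$ is used, since for $\dimens=1$ the hyperplane degenerates to the origin and no vanishing is obtained (consistently, a compactly supported function on~$\R$ need not have a compactly supported primitive). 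A holomorphic division argument then shows that $\hat g_j/\xi_j$ is entire, and the compatibility relations give $\hat g_j/\xi_j=\hat g_k/\xi_k$ wherever both are defined, so $\hat\eta:=\hat g_j/(i\xi_j)$ is a well-defined entire function independent of~$j$. Dividing by the polynomial~$\xi_j$ preserves the exponential-type bounds, so by Paley--Wiener--Schwartz $\hat\eta$ is the Fourier transform of some $\eta\in\cdistr(\R^\dimens)$, with the support estimate localizing $\spt(\eta)$ inside $\conv(\spt(g))$ as claimed in remark~\ref{rmk:spt}. Finally $\widehat{\partial_j\eta}=i\xi_j\hat\eta=\hat g_j$ gives $\der\eta=g$.

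The main obstacle is precisely this compact-support upgrade: producing some potential is elementary, but retaining compact support reflects the vanishing of the compactly supported first de Rham cohomology $H^1_c(\R^\dimens)=0$ for $\dimens\geq 2$. Concretely the difficulty is the division by~$\xi_j$, which is not a pointwise operation on the transforms but must be justified holomorphically using the closedness relations $\xi_i\hat g_j=\xi_j\hat g_i$, together with checking that the quotient still satisfies the exponential-type estimates required to land back in~$\cdistr(\R^\dimens)$.
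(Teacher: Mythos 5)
Your proposal is correct in substance, but be aware that the paper does not prove this lemma at all: it simply refers to \cite{HO-topological-vector-spaces, MA-poincare-derham-theorems}, so your first assertion (exactness of closed $1$-currents on the contractible space $\R^\dimens$, via \cite{deRham-differentiable-manifolds}) coincides with the paper's citation-based route, and the genuine divergence is your self-contained treatment of the compact-support step. There a much softer argument is available, and it is the one the paper implicitly relies on in remark~\ref{rmk:spt}: once \emph{some} potential $\eta\in\distr(\R^\dimens)$ exists, one has $\der\eta=g=0$ on the set $\conv(\spt(g))^c$, which is connected precisely because $\dimens\geq2$, so $\eta$ is constant there, and subtracting that constant yields a potential supported in $\conv(\spt(g))$ --- two lines, given the first assertion. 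Your Paley--Wiener--Schwartz argument proves the same thing in the frequency domain; what it buys is independence from the first assertion (it constructs $\eta$ directly) and a quantitative support bound matching remark~\ref{rmk:spt}, at the cost of two points you should make explicit. First, the division lemma needs $\hat g_j$ to vanish on the \emph{complex} hyperplane $\{\zeta_j=0\}\subset\C^\dimens$, not just the real one: this follows by analytically continuing the identities $\zeta_i\hat g_j=\zeta_j\hat g_i$ from $\R^\dimens$ to $\C^\dimens$ (both sides are entire) and then arguing on the complex hyperplane exactly as you did on the real one. Second, preservation of the Paley--Wiener bounds under division by $\zeta_j$ is not automatic where $\abs{\zeta_j}$ is small; it requires a maximum-modulus argument in the $\zeta_j$-variable over $\{\abs{\zeta_j}\leq1\}$, together with the observation that the supporting function of $\conv(\spt(g))$ evaluated at $\operatorname{Im}\zeta$ changes only by a bounded amount on the unit circle, so the correction is absorbed into the constant. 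Both repairs are routine, so I would not call this a gap. One caveat on your optional mollification aside: the convergence of the normalized potentials $\eta_\eps$ in $\distr(\R^\dimens)$ does not follow automatically from $\der\eta_\eps\to g$, and the naive homotopy operator does not extend boundedly to distributions (pairing it with a test function produces a divergent $t$-integral near $t=0$ for distributions of high order), so that sketch would need real work; your primary route through the currents literature --- the same route the paper takes --- is the safe one.
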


The proof of lemma~\ref{lemma:poincarelemma} can be found in~\cite{HO-topological-vector-spaces, MA-poincare-derham-theorems}.
We first prove the unique continuation result for the normal operator.
The proof is based on the fact that we can reduce the unique continuation problem of~$\no_1$ to a unique continuation problem of~$\no_0$ acting on the components of~$\der f$.

The assumptions of theorem~\ref{thm:uniquecontinuationofnormaloperator} come in two stages.
We first assume that $\der f|_V=0$.
To make sense of the next assumption that~$\no_1f$ vanishes at~$x_0$ to infinite order, we need to ensure that it is smooth near this point.
This is given by the next lemma.

\begin{lemma}
\label{lma:N1smooth}
Let $V\subset\R^\dimens$ be an open set and $f\in(\cdistr(\R^\dimens))^\dimens$.
If $\der f|_V=0$, then $\no_1f|_V$ is smooth.
\end{lemma}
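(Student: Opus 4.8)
The plan is to reduce the smoothness of~$\no_1 f$ on~$V$ to the (trivial) smoothness of~$\der f$ on~$V$, by exploiting two structural features of the normal operator: it produces solenoidal fields, and its exterior derivative is governed by the scalar normal operator~$\no_0$. Concretely, I would first record the two operator identities
\begin{equation}
\diver(\no_1 f)=0
\qquad\text{and}\qquad
(\der\no_1 f)_{ki}=C\,\no_0\big((\der f)_{ki}\big),
\end{equation}
valid for every $f\in(\cdistr(\R^\dimens))^\dimens$ with a dimensional constant~$C$. Both are most transparent as Fourier multiplier statements: by the inversion formula~\eqref{eq:invertingsolenoidalpartfromnormaloperator} the multiplier of~$\no_1$ is a constant times $\abs{\xi}^{-1}(\delta_{ij}-\xi_i\xi_j\abs{\xi}^{-2})$, i.e.\ the Leray projection onto divergence-free fields scaled by~$\abs{\xi}^{-1}$, while by~\eqref{eq:invertingnormaloperatorscalar} the multiplier of~$\no_0$ is a constant times~$\abs{\xi}^{-1}$. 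Since $\widehat{(\der f)_{ki}}=i(\xi_k\hat f_i-\xi_i\hat f_k)$, contracting the projector against~$\xi$ annihilates the potential ($\xi_i\xi_j$) component and yields both identities at once; the first can also be read off directly from~\eqref{eq:normaloperatorvectorfield} by checking that $\sum_i\partial_i(2x_ix_j\abs{x}^{-\dimens-1})=0$ away from the origin. Since~$\der f$ is compactly supported whenever~$f$ is, the right-hand side~$\no_0((\der f)_{ki})$ is a well-defined element of~$\distr(\R^\dimens)$.

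Next I would invoke pseudolocality. The hypothesis $\der f|_V=0$ says that each component $(\der f)_{ki}$ is smooth (indeed zero) on~$V$, so its singular support is disjoint from~$V$. The operator~$\no_0$ is convolution with $2\abs{\cdot}^{1-\dimens}$, a kernel smooth away from the origin, hence~$\no_0$ is pseudolocal and cannot enlarge singular supports. Consequently $\no_0((\der f)_{ki})$, and therefore $\der\no_1 f$, is smooth on~$V$.

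Finally I would upgrade smoothness of the curl to smoothness of~$\no_1 f$ itself using the global identity $\diver(\no_1 f)=0$. For any one-form~$g$ one has the componentwise identity $\Delta g_i=\partial_i\diver(g)-\sum_j\partial_j(\der g)_{ij}$, which is immediate from $(\der g)_{ij}=\partial_i g_j-\partial_j g_i$. Applying it to $g=\no_1 f$, the first term vanishes identically while the second is smooth on~$V$ by the previous step, so $\Delta(\no_1 f)_i$ is smooth on~$V$. As the Laplacian is hypoelliptic, interior elliptic regularity gives that each component $(\no_1 f)_i$ is smooth on~$V$, which is the assertion of the lemma.

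I expect the main obstacle to be the rigorous justification of the identity $\der\no_1 f=C\,\no_0\,\der f$ at the level of compactly supported distributions, where the multipliers~$\abs{\xi}^{-1}$ are singular at the origin and the convolution kernels in~\eqref{eq:normaloperatorvectorfield} and~\eqref{eq:normaloperatorofscalarfield} are only conditionally integrable. This is cleanest to settle by working with those kernels directly: one verifies the pointwise relation between $\partial_k K_{ij}-\partial_i K_{kj}$ and the derivatives of~$\abs{\cdot}^{1-\dimens}$ away from the diagonal, the potentially delicate diagonal contributions being harmless because~$\der f$ is supported away from the point~$x_0\in V$ where smoothness is being tested.
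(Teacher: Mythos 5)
Your proof is correct, but it takes a genuinely different route from the paper's. The paper argues locally and more elementarily: near a point of~$V$ it applies the Poincar\'e lemma on a small ball to write $f|_B=\der h$, cuts off to get $f=\der h'+g$ with $h'\in\cdistr(\R^\dimens)$ and $g$ vanishing on a smaller ball, uses $\xrt_1(\der h')=0$ to conclude $\no_1 f=\no_1 g$ there, and then invokes pseudolocality of the convolution kernel in~\eqref{eq:normaloperatorvectorfield} directly --- no operator identities and no elliptic theory. You instead work globally from two structural identities: solenoidality $\diver(\no_1 f)=0$ and the commutation relation $\der(\no_1 f)=(\dimens-1)^{-1}\no_0(\der f)$, followed by pseudolocality of~$\no_0$ and hypoellipticity of~$\Delta$ via $\Delta(\no_1 f)_i=-\sum_j\partial_j(\der\no_1 f)_{ij}$. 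Both identities are sound and in fact appear in the paper: the commutation relation is exactly~\eqref{eq:relationofnormaloperators}, derived in the proof of theorem~\ref{thm:uniquecontinuationofnormaloperator} as a distributional identity valid for every $f\in(\cdistr(\R^\dimens))^\dimens$ (so there is no circularity in using it for this lemma, which is only needed to interpret the infinite-order vanishing hypothesis), and solenoidality follows in one line by duality from $\diver(\xrt_1^*\varphi)=0$, used in~\eqref{eq:xraytransformofpotential}: $\ip{\diver(\no_1f)}{\varphi}=-\ip{\xrt_1 f}{\xrt_1(\nabla\varphi)}=0$. What each buys: the paper's proof is shorter and self-contained, while yours shares its main computation with theorem~\ref{thm:uniquecontinuationofnormaloperator} and yields en route that $\der(\no_1 f)$ is smooth on~$V$, which is precisely what underlies the paper's remark that the theorem holds under the weaker hypothesis that $\der(\no_1f)$ vanishes to infinite order. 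One small correction: your closing worry about ``conditionally integrable'' kernels is unfounded --- both $\abs{x}^{1-\dimens}$ and $2x_ix_j\abs{x}^{-\dimens-1}$ are homogeneous of degree $1-\dimens>-\dimens$, hence locally integrable, and the pointwise identity
\begin{equation}
\frac{2x_ix_j}{\abs{x}^{\dimens+1}}
=
\frac{2}{\dimens-1}\Bigl(\delta_{ij}\abs{x}^{1-\dimens}-\partial_i\bigl(x_j\abs{x}^{1-\dimens}\bigr)\Bigr)
\end{equation}
holds distributionally with no diagonal (delta-type) corrections, since $x_j\abs{x}^{1-\dimens}$ is homogeneous of degree $2-\dimens$ and its pointwise and distributional derivatives agree; so the kernel verification you defer goes through without extra care.
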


\begin{proof}
Take any $x_0\in V$ and a small open ball~$B$ centered at it and contained in~$V$.
As $\der f|_B=0$, the Poincar\'e lemma applied in the ball~$B$ (lemma~\ref{lemma:poincarelemma} is applicable because~$B$ is diffeomorphic to~$\R^n$) gives $f|_B=\der h$ for some $h\in\distr(B)$.
Let $B'\subset B$ be a smaller ball with the same center, and let $\chi\in\csmooth(B)$ be a bump function so that $\chi|_{B'}\equiv1$.
If we let $h'=\chi h\in\cdistr(\R^\dimens)$, then $f=\der h'+g$, where $g\in(\cdistr(\R^\dimens))^\dimens$ with $g|_{B'}=0$.

As $\xrt_1(\der h')=0$ (cf.~\eqref{eq:xraytransformofpotential}), we have $\no_1f=\no_1g$.
Because $g|_{B'}=0$, it follows from properties of convolutions that~$\no_1f$ is smooth in~$B'$.
Now that~$\no_1f$ is smooth in a neighborhood of any point in~$V$, the claim follows.
\end{proof}

\begin{proof}[Proof of theorem~\ref{thm:uniquecontinuationofnormaloperator}]
The normal operator has an expression 
\begin{equation}
(\no_1 f)_i
=
\sum_{j=1}^\dimens\frac{2x_ix_j}{\abs{x}^{\dimens+1}}\ast f_j.
\end{equation}
We can write the kernel as
\begin{equation}
\frac{2x_ix_j}{\abs{x}^{n+1}}
=
\frac{2}{\dimens-1}\bigg(\delta_{ij}\abs{x}^{1-\dimens}-\partial_i(x_j\abs{x}^{1-\dimens})\bigg)
%\frac{2}{1-\dimens}\bigg(\partial_i(x_j\abs{x}^{1-\dimens})-\delta_{ij}\abs{x}^{1-\dimens}\bigg)
\end{equation}
and we obtain
\begin{equation}
(\no_1 f)_i
=
%\frac{2}{1-\dimens}\bigg(\sum_{j=1}^\dimens (x_j\abs{x}^{1-\dimens}\ast\partial_i f_j)-\frac{1}{2}\no_0 f_i\bigg)
\frac{2}{\dimens-1}\bigg(
\frac{1}{2}\no_0 f_i
-
\sum_{j=1}^\dimens x_j\abs{x}^{1-\dimens}\ast\partial_i f_j
\bigg)
.
\end{equation}
We can calculate that
\begin{equation}
\label{eq:relationofnormaloperators}
\partial_k (\no_1f)_i-\partial_i(\no_1 f)_k
=
\frac{1}{\dimens-1}
\no_0(\partial_k f_i-\partial_i f_k)
.
\end{equation}
This can be interpreted as $
\der(\no_1f)
=
(\dimens-1)^{-1}
\no_0(\der f)
$,
where the scalar normal operator~$\no_0$ acts on the $2$-form~$\der f$ componentwise to produce another $2$-form.
The normal operator commutes with the exterior derivative in this sense.

Since~$\no_1 f$ vanishes to infinite order at $x_0\in V$ also $\no_0(\partial_k f_i-\partial_i f_k)$ vanishes to infinite order at~$x_0$.
Using lemma~\ref{lemma:ucpofscalarnormaloperaor} we obtain $\der f=0$.
By lemma~\ref{lemma:poincarelemma} there is $\phi\in\cdistr (\R^\dimens)$ such that $\der \phi=f$.
This concludes the proof.
\end{proof}

Lemma~\ref{lemma:ucpofscalarnormaloperaor} is false if no restrictions are imposed on~$g|_V$ \cite{KEQ-wavelet-methods-ROI-tomography, NA-mathematics-computerized-tomography}, and the assumption $g|_V=0$ is the most convenient.
Consequently, the assumption $\der f|_V=0$ in theorem~\ref{thm:uniquecontinuationofnormaloperator} is important.
This condition is invariant under gauge transformations of the field~$f$.

If $\sol{f}|_V=\no_1 f|_V=0$, then one can alternatively use the unique continuation of the fractional Laplacian $(-\Delta)^s$, $s\in (0, 1)$, to prove the unique continuation of the normal operator~\cite{GSU-calderon-problem-fractional-schrodinger}.
This follows since $(-\Delta)^{1/2}\sol{f}=c_{1, \dimens}(-\Delta)\no_1 f$ where $\sol{f}\in (H^r (\R^\dimens))^\dimens$ for some $r\in\R$ when $f\in(\cdistr(\R^\dimens))^
\dimens$.
One can also make use of the fact that $(-\Delta)^{-1/2}$ is a Riesz potential and use its unique continuation properties~\cite{IM-unique-continuation-riesz-potential} (see equation~\eqref{eq:invertingsolenoidalpartfromnormaloperator}).

The rest of the results follow easily from theorem~\ref{thm:uniquecontinuationofnormaloperator}.

\begin{proof}[Proof of theorem~\ref{thm:globalpartialdataresult}]
\NTR{Replaced ``This proves the other direction" with ``This shows that $\xrt_1f=\xrt_1(\der\phi)=0$, and especially~$\xrt_1 f$ vanishes on all lines intersecting~$V$".}Let $f=\der\phi$ where $\phi\in\cdistr (\R^\dimens)$. Then $\der\phi\in(\cdistr(\R^\dimens))^\dimens$ and using the definition of the X-ray transform on distributions we obtain
\begin{equation}
\label{eq:xraytransformofpotential}
\langle \xrt_1(\der\phi), \varphi\rangle=\langle\der\phi, \xrt^*_1\varphi\rangle=\langle\phi, \diver(\xrt^*_1\varphi)\rangle=0.
\end{equation}
Here we used the fact that $\diver(\xrt^*_1\varphi)=0$ which follows from a straightforward computation. This shows that $\xrt_1f=\xrt_1(\der\phi)=0$, and especially~$\xrt_1 f$ vanishes on all lines intersecting~$V$. Assume then that $\der f|_V=0$. Since $\xrt_1 f=0$ on all lines intersecting~$V$ we obtain $\no_1 f|_V=0$. Theorem~\ref{thm:uniquecontinuationofnormaloperator} implies that $f=\der\phi$ for some $\phi\in\cdistr(\R^\dimens)$. This concludes the proof.
\end{proof}

\begin{proof}[Proof of theorem~\ref{thm:localpartialdataresult}]
\NTR{Explained the proof of the other direction of the claim more clearly.}If $f=\der\phi$ where $\phi\in H_0^1(\Omega)$, then using the same argument as in the proof of theorem~\ref{thm:globalpartialdataresult} and the fact that $H^1_0(\Omega)\subset\cdistr(\R^\dimens)$ in the sense of zero extension we obtain that $\xrt_1f=0$, and especially~$\xrt_1 f$ vanishes on all lines intersecting~$V$.
Then assume that $\der f|_V=0$ and $\xrt_1 f=0$ on all lines intersecting~$V$. Let $\widetilde{f}\in(\cdistr(\R^\dimens))^\dimens$ be the zero extension of~$f$.
The assumptions imply that $\der \widetilde{f}|_V=0$ and $\xrt_1\widetilde{f}=0$ on all lines intersecting
~$V$. Theorem~\ref{thm:globalpartialdataresult} implies that $\widetilde{f}=\der\phi$ for some $\phi\in\cdistr (\R^\dimens)$. Since $\Delta\phi=\diver (\widetilde{f})\in H^{-1}(\R^\dimens)$ we have $\phi\in H^1(\R^\dimens)$ by elliptic regularity.
On the other hand, $\spt (\phi)\subset \overline{\Omega}$ and hence $\phi\in H^1_0(\Omega)$ \cite[Theorem 3.33]{ML-strongly-elliptic-systems}.
The claim follows from the fact that $\der\phi=\widetilde{f}=f$ in~$\Omega$. 
\end{proof}

\begin{proof}[Proof of corollary~\ref{cor:ucpofgeneralnormaloperator}]
We know that the normal operator is $\no_A=A^T\circ\no_1\circ A$.
The assumptions imply that $\der (Af)|_V=\no_1(Af)|_V=0$. By theorem~\ref{thm:uniquecontinuationofnormaloperator} we obtain that $Af=\der\psi$ for some $\psi\in\cdistr (\R^\dimens)$.
This gives the claim.
\end{proof}

\begin{proof}[Proof of corollary~\ref{cor:partialdatageneraltransform}]
The claim follows directly from corollary~\ref{cor:ucpofgeneralnormaloperator} and from the fact that $\xrt_A=\xrt_1\circ A$.
\end{proof}

In theorems~\ref{thm:uniquecontinuationofnormaloperator} and~\ref{thm:globalpartialdataresult} one has $\spt(\phi)\subset\conv(\spt(f))$ where~$\conv(\spt(f))$ is the convex hull of~$\spt(f)$.
This follows from the fact that~$\phi$ has compact support and~$\der\phi$ vanishes in the connected set~$\conv(\spt (f))^c$.
This was pointed out in remark~\ref{rmk:spt}.

In corollaries~\ref{cor:ucpofgeneralnormaloperator} and~\ref{cor:partialdatageneraltransform} one also has~$\spt(\psi)\subset\conv(\spt(f))$.
This holds since~$\der\psi$ vanishes in the connected set~$\conv(\spt(Af))^c$ and $\spt(Af)=\spt(f)$.

\begin{proof}[Proof of corollary~\ref{cor:transverseraytransform}]
Assume first that $\diver(f)=0$. Since~$f$ is a covector-valued distribution in~$\R^2$ we can identify $\der f=\partial_1 f_2-\partial_2 f_1$. It follows that $\der (Bf)=-\diver (f)=0$ and thus $Bf=\der\eta$ for some $\eta\in\cdistr (\R^\dimens)$ by lemma~\ref{lemma:poincarelemma}. Therefore $\xrt_\perp f=\xrt_1(Bf)=\xrt_1(\der\eta)=0$. Assume then that $\diver (f)|_V=0$ and $\xrt_\perp f=0$ on all lines intersecting~$V$. As above we obtain that $\der (Bf)|_V=0$ and $\xrt_\perp f=0$ on all lines intersecting
~$V$. Corollary
~\ref{cor:partialdatageneraltransform} implies that $f=B^{-1}(\der\psi)$ for some $\psi\in\cdistr (\R^\dimens)$. From this we obtain that $\diver (f)=0$.

Assume then that $\der f|_V=\diver(f)|_V=0$ and both~$\xrt_1 f$ and~$\xrt_\perp f$ vanish on all lines intersecting~$V$. By the discussion above we obtain that $\diver(f)=0$. On the other hand, theorem~\ref{thm:globalpartialdataresult} implies that $f=\der\phi$ for some $\phi\in\cdistr(\R^2)$. Therefore $\Delta\phi=0$ and since~$\phi$ has compact support we must have $\phi=0$, i.e. $f=0$.
\end{proof}

\subsection{Proofs based on Stokes' theorem}
\label{sec:alternativeproof}

\NTR{Added reference to the article~\cite{AJM-v-line-transform}.}In this section we give alternative proofs for the partial data results using Stokes' theorem in~$\R^\dimens$. A similar approach was used in~\cite{JUH-principles-of-doppler-tomography, SSLP-doppler-tomography-vector-fields} in the case of full data, and also recently in~\cite{AJM-v-line-transform} for the generalized V-line transform. We prove the results first for compactly supported smooth one-forms and then use standard mollification argument to prove them for compactly supported covector-valued distributions. We only need to prove theorem~\ref{thm:globalpartialdataresult} since the rest of the partial data results follow from it. We will use the following lemma.

\begin{lemma}[{\cite[Theorem 1.2]{IM-unique-continuation-riesz-potential}}]
\label{lemma:partialdataproblemscalar}
Let $V\subset\R^\dimens$ be some nonempty open set and $g\in\cdistr (\R^\dimens)$. If $g|_V=0$ and $\xrt_0 g=0$ on all lines intersecting~$V$, then $g=0$.
\end{lemma}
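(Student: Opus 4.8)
The plan is to reduce this partial data statement to the unique continuation property of the scalar normal operator~$\no_0$ (lemma~\ref{lemma:ucpofscalarnormaloperaor}), exactly in the way theorem~\ref{thm:globalpartialdataresult} is deduced from theorem~\ref{thm:uniquecontinuationofnormaloperator} in the one-form case. The bridge between the two formulations is the observation that the back-projection $\no_0=\xrt_0^*\xrt_0$, evaluated at a point of~$V$, only ever integrates the data~$\xrt_0 g$ over lines passing through that point, all of which meet~$V$.

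First I would show that $\no_0 g|_V=0$. Fix a test function $\psi\in\csmooth(\R^\dimens)$ with $\spt(\psi)\subset V$. By the definition of the adjoint,
\begin{equation}
\ip{\no_0 g}{\psi}=\ip{\xrt_0^*\xrt_0 g}{\psi}=\ip{\xrt_0 g}{\xrt_0\psi}.
\end{equation}
Every line in the support of $\xrt_0\psi$ meets $\spt(\psi)\subset V$, so $\xrt_0\psi$ is supported in the open set of lines intersecting~$V$. Since $\xrt_0 g$ vanishes on that open set by hypothesis, the pairing is zero. As $\psi$ was arbitrary, $\no_0 g|_V=0$.

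Second, since $\no_0 g$ vanishes on the open set~$V$ it is in particular smooth there and vanishes to infinite order at every $x_0\in V$; that is, $\partial^\beta(\no_0 g)(x_0)=0$ for all $\beta\in\N^\dimens$. Together with the hypothesis $g|_V=0$, lemma~\ref{lemma:ucpofscalarnormaloperaor} yields $g=0$, which is the claim.

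The only genuinely delicate point is the distributional bookkeeping in the first step: one must check that $\xrt_0\psi$ is a legitimate test function on~$\Gamma$ whose support is contained in the open set $\{\gamma:\gamma\cap V\neq\emptyset\}$, and that the hypothesis ``$\xrt_0 g=0$ on all lines meeting~$V$'' is correctly read as the vanishing of the distribution $\xrt_0 g\in\cdistr(\Gamma)$ on this open set, so that support containment forces the pairing to vanish. Everything of real depth is already packaged inside lemma~\ref{lemma:ucpofscalarnormaloperaor}, whose proof in~\cite{IM-unique-continuation-riesz-potential} rests on unique continuation for Riesz potentials and fractional Laplacians; granting that result, the present lemma is a short reduction. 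Alternatively, one could bypass~$\no_0$ and argue directly from the inversion formula~\eqref{eq:invertingnormaloperatorscalar} together with unique continuation of $(-\Delta)^{1/2}$, but the reduction above is the most economical.
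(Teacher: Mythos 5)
Your proof is correct. The paper does not prove this lemma itself but imports it as \cite[Theorem 1.2]{IM-unique-continuation-riesz-potential}, and your reduction --- pairing against $\xrt_0\psi$ for $\psi$ supported in~$V$ to get $\no_0 g|_V=0$, then invoking lemma~\ref{lemma:ucpofscalarnormaloperaor} --- is precisely the argument used in that reference, and it mirrors the paper's own deduction of theorem~\ref{thm:globalpartialdataresult} from theorem~\ref{thm:uniquecontinuationofnormaloperator}.
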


\begin{proof}[Alternative proof of theorem~\ref{thm:globalpartialdataresult}]
By lemma~\ref{lemma:poincarelemma} it suffices to show that $\der f=0$.
Assume first that $n=2$ and $f\in (\csmooth(\R^2))^2$.
Let~$\gamma$ be any (oriented) line going through~$V$ and~$\nu$ the counterclockwise rotated normal to~$\gamma$.
We denote by $\gamma_h=h\nu+\cev{\gamma}$ the reversed parallel line shifted by $h>0$ in the direction of~$\nu$ so that~$\gamma_h$ also intersects~$V$.
By assumption $\int_\gamma f=\int_{\gamma_h}f=0$.

We form a closed loop~$\widetilde{\gamma}_h$ enclosing counterclockwise a rectangular region~$R_h$ such that the ends are outside~$\spt(f)$ (see figure~\ref{fig:green}).
When considered as chains, we have $\widetilde{\gamma}_h=\partial R_h$.
As the chains $\gamma-\gamma_h$ and~$\widetilde{\gamma}_h$ differ only outside the support of~$f$, the integrals coincide.
By Stokes' theorem
\begin{equation}
0
=
\int_\gamma f
-
\int_{\gamma_h}f
=
\int_{\widetilde{\gamma}_h}f
=
\int_{\partial R_h}f
=
\int_{R_h}\der f
=
\int_{R_h}\star\der f\,\der\mu,
\end{equation}
where~$\star$ is the Hodge star and~$\mu$ is the $2$-Hausdorff measure.

We aim to show that the scalar function~$\star\der f$ vanishes.
Scaling with~$h$, we find
\begin{equation}
0
=
\lim_{h\to0}
\frac1h
\int_{R_h}\star\der f\,\der\mu
=
\int_{\gamma}\star\der f\,\der s.
\end{equation}
Now that $\star\der f|_V=0$ and $\xrt_0(\star\der f)(\gamma)=0$ for all lines~$\gamma$ meeting~$V$, lemma~\ref{lemma:partialdataproblemscalar} implies that $\star\der f=0$ and thus also $\der f=0$ in the whole plane.

Consider then the case $\dimens\geq3$ for a compactly supported smooth one-form~$f$.
Let $P\subset\R^\dimens$ be any two-plane meeting~$V$ and $\iota_P\colon P\to\R^\dimens$ the corresponding inclusion.
By the argument above for the two-form~$\iota^*_Pf$ in the plane~$P$ we have that $\iota_P^*\der f=0$ for all such planes.

Take any point $z\in\R^\dimens$.
For any plane~$P$ through~$z$ that intersects~$V$ we have $\iota_P^*\der f=0$.
This is an open subset of the Grassmannian of $2$-planes through~$z$, so $\der f(z)=0$.
As the point~$z$ was arbitrary, we have $\der f=0$.

Finally, let $f\in(\cdistr(\R^\dimens))^
\dimens$ and define $f_{\epsilon}=f\ast j_{\epsilon}=(f_1\ast j_{\epsilon}, \dotso , f_n\ast j_{\epsilon})$ where $j_{\epsilon}\in \csmooth(\R^\dimens)$ is the standard mollifier.
Then $f_{\epsilon}\in (\csmooth(\R^\dimens))^\dimens$ and $\langle \xrt_1 (f\ast j_{\epsilon}), \varphi\rangle=\langle\xrt_1 f, \xrt_0 j_{\epsilon}\circledast\varphi\rangle$ where
\begin{equation}
(h\circledast g)(z, \theta)=\int_{\theta^{\perp}}h(z-y, \theta)g(y, \theta)\der y.
\end{equation}
Hence there is a nonempty open set $W\subset V$ such that for small $\epsilon >0$ we have $f_{\epsilon}|_W=0$ and $\xrt_1 f_{\epsilon}=0$ on all lines intersecting $W$.
Using the above reasoning for smooth one-forms we obtain $0=\der f_{\epsilon}=\der f\ast j_{\epsilon}$ for small $\epsilon>0$. Taking $\epsilon\rightarrow 0$ we get $\der f=0$.
\end{proof}

\begin{figure}[htp]
\centering
\includegraphics[height=7.3cm]{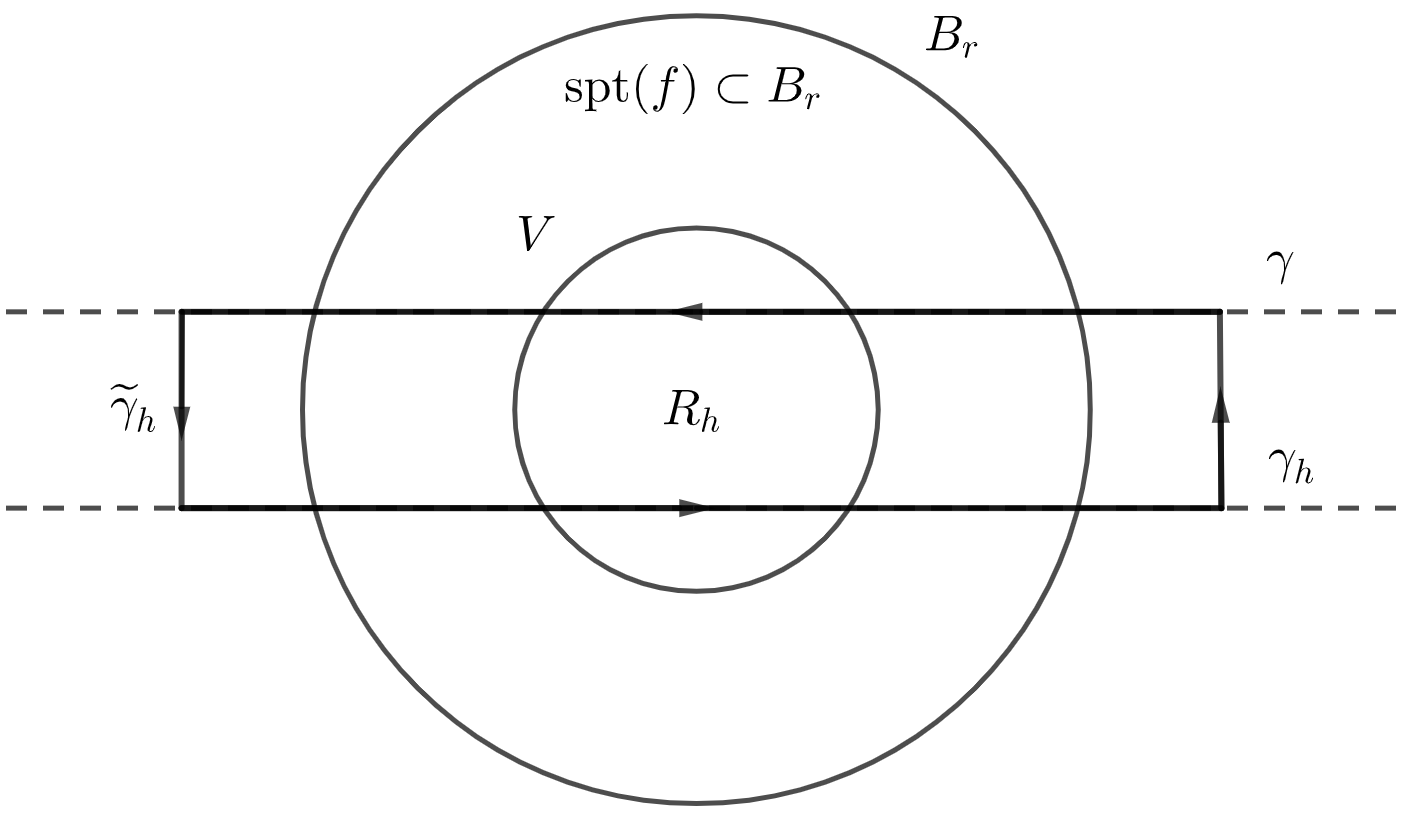}
\caption{Basic idea of the alternative proof of theorem~\ref{thm:localpartialdataresult} when $\dimens=2$. We may assume that~$f$ is supported in a ball~$B_r$. We form a closed loop~$\widetilde{\gamma}_h$ with the lines~$\gamma$ and~$\gamma_h$ (dashed) enclosing the rectangular region~$R_h$. Then we apply Stokes' theorem and a limit argument $h\rightarrow 0$ together with a known partial data result for scalar fields to obtain that $\der f=0$.}
\label{fig:green}
\end{figure}

Now the proof of theorem~\ref{thm:localpartialdataresult} follows in the same way from theorem~\ref{thm:globalpartialdataresult} as before using the zero extension~$\widetilde{f}$.
Corollaries~\ref{cor:partialdatageneraltransform} and~\ref{cor:transverseraytransform} are also direct consequences of theorem~\ref{thm:globalpartialdataresult} since $\xrt_A=\xrt_1\circ A$.

\NTR{We explained the proof of the support theorem in more detail.}
Moreover, the above alternative proof can be used to prove a complementary support theorem for the transform~$\xrt_A$: if $\der (Af)|_C=0$ and $\xrt_Af=\xrt_1(Af)=0$ on all lines not intersecting a convex and compact set~$C$, then $f=A^{-1}(\der\psi)$ for some potential~$\psi$ (see~\cite[Theorem 7.5]{SU:microlocal-analysis-integral-geometry} for a similar support theorem for the X-ray transform~$\xrt_1$). Indeed, if~$\gamma$ is any line not intersecting~$C$, then we can form a closed loop~$\widetilde{\gamma}_h$ as in figure~\ref{fig:green} so that the loop is completely contained in~$C^c$ and the ends are outside the support of~$f$. Using Stokes' theorem and a limit argument $h\rightarrow 0$ as in the alternative proof above we obtain that $\xrt_0(\star\der (Af))=0$ on all lines not intersecting~$C$. Now we can use the Helgason support theorem for scalar fields (see e.g.~\cite[Corollary 6.1]{HE:integral-geometry-radon-transforms} and~\cite[Section 5.2]{SU:microlocal-analysis-integral-geometry}) to conclude that $\der (Af)=0$ in~$C^c$. Since also $\der(Af)|_C=0$ we get that~$Af$ is a closed one-form and thus exact, i.e. there is a scalar field~$\psi$ such that $Af=\der\psi$.

\NTR{References have been updated.}

\bibliography{refs} 

\begin{thebibliography}{10}

\bibitem{ABH-support-theorem-transverse-ray}
A.~Abhishek.
\newblock Support theorems for the transverse ray transform of tensor fields of
  rank $m$.
\newblock {\em J. Math. Anal. Appl.}, 485(2):123828, 2020.

\bibitem{AJM-v-line-transform}
G.~Ambartsoumian, M.~J.~L. Jebelli, and R.~K. Mishra.
\newblock {Generalized V-line transforms in 2D vector tomography}.
\newblock {\em Inverse Problems}, 36(10):104002, 2020.

\bibitem{AD-finsler-scalar-plus-oneform}
Y.~M. Assylbekov and N.~S. Dairbekov.
\newblock {The X-ray Transform on a General Family of Curves on Finsler
  Surfaces}.
\newblock {\em J. Geom. Anal.}, 28(2):1428--1455, 2018.

\bibitem{BH-tomographic-reconstruction-vector-fields}
H.~{Braun} and A.~{Hauck}.
\newblock {Tomographic Reconstruction of Vector Fields}.
\newblock {\em IEEE Trans. Signal Process.}, 39(2):464--471, 1991.

\bibitem{CNDK-solving-interior-problem-ct-with-apriori-knowledge}
M.~Courdurier, F.~Noo, M.~Defrise, and H.~Kudo.
\newblock Solving the interior problem of computed tomography using \textit{a
  priori} knowledge.
\newblock {\em Inverse Problems}, 24(6):065001, 2008.

\bibitem{CMR-ucp-higher-order-laplacians}
G.~{Covi}, K.~{M{\"o}nkk{\"o}nen}, and J.~{Railo}.
\newblock {Unique continuation property and Poincar\'e inequality for higher
  order fractional Laplacians with applications in inverse problems}.
\newblock {\em Inverse Probl. Imaging}, 2020.
\newblock To appear.

\bibitem{dESUZ-generic-uniqueness-mixed-ray}
M.~V. {de Hoop}, T.~{Saksala}, G.~{Uhlmann}, and J.~{Zhai}.
\newblock {Generic uniqueness and stability for mixed ray transform}.
\newblock 2019.
\newblock arXiv:1909.11172.

\bibitem{deSZ-mixed-ray}
M.~V. de~Hoop, T.~Saksala, and J.~Zhai.
\newblock {Mixed ray transform on simple 2-dimensional Riemannian manifolds}.
\newblock {\em Proc. Amer. Math. Soc.}, 147(11):4901--4913, 2019.

\bibitem{deRham-differentiable-manifolds}
G.~{de Rham}.
\newblock {\em {Differentiable Manifolds}}.
\newblock {Springer-Verlag}, {First} edition, 1984.

\bibitem{DEN-inversion-of-3d-tensor-fields}
A.~Denisjuk.
\newblock {Inversion of the x-ray transform for 3D symmetric tensor fields with
  sources on a curve}.
\newblock {\em Inverse Problems}, 22(2):399--411, 2006.

\bibitem{DS-tomography}
E.~Y. Derevtsov and I.~Svetov.
\newblock Tomography of tensor fields in the plain.
\newblock {\em Eurasian J. Math. Comput. Appl.}, 3(2):24--68, 2015.

\bibitem{GSU-calderon-problem-fractional-schrodinger}
T.~Ghosh, M.~Salo, and G.~Uhlmann.
\newblock {The {C}alder\'on problem for the fractional {S}chr\"odinger
  equation}.
\newblock {\em Anal. PDE}, 13:455--475, 2020.

\bibitem{GT-elliptic-pdes}
D.~{Gilbarg} and N.~S. {Trudinger}.
\newblock {\em {Elliptic Partial Differential Equations of Second Order}}.
\newblock Springer, {S}econd edition, 2001.
\newblock Reprint.

\bibitem{HL-applications-to-photoelasticity}
H.~Hammer and B.~Lionheart.
\newblock {Application of Sharafutdinov's Ray Transform in Integrated
  Photoelasticity}.
\newblock {\em J. Elasticity}, 75(3):229--246, 2004.

\bibitem{HE:integral-geometry-radon-transforms}
S.~{Helgason}.
\newblock {\em Integral {G}eometry and {R}adon {T}ransforms}.
\newblock Springer, {F}irst edition, 2011.

\bibitem{HO-topological-vector-spaces}
J.~{Horv\'ath}.
\newblock {\em {Topological Vector Spaces and Distributions}}, volume~{I}.
\newblock {Addison-Wesley}, 1966.

\bibitem{IM:integral-geometry-review}
J.~Ilmavirta and F.~Monard.
\newblock Integral geometry on manifolds with boundary and applications.
\newblock In R.~Ramlau and O.~Scherzer, editors, {\em The Radon Transform: The
  First 100 Years and Beyond}. de Gruyter, 2019.

\bibitem{IM-unique-continuation-riesz-potential}
J.~{Ilmavirta} and K.~{M\"onkk\"onen}.
\newblock Unique continuation of the normal operator of the x-ray transform and
  applications in geophysics.
\newblock {\em Inverse Problems}, 36(4):045014, 2020.

\bibitem{IMR-mixing-ray-transforms}
J.~{Ilmavirta}, K.~{M\"onkk\"onen}, and J.~{Railo}.
\newblock {On tensor decompositions and algebraic structure of the mixed and
  transverse ray transforms}.
\newblock 2020.
\newblock arXiv:2009.01043.

\bibitem{JASESPL-ultrasound-doppler-tomography}
T.~Jansson, M.~Almqvist, K.~Str\r{a}hl\'en, R.~Eriksson, G.~Sparr, H.~W.
  Persson, and K.~Lindstr\"om.
\newblock {Ultrasound Doppler vector tomography measurements of directional
  blood flow}.
\newblock {\em Ultrasound Med. Biol.}, 23(1):47--57, 1997.

\bibitem{JUH-principles-of-doppler-tomography}
P.~Juhlin.
\newblock {Principles of Doppler Tomography}.
\newblock Technical report, {Center for Mathematical Sciences, Lund Institute
  of Technology}, S-221 00 Lund, Sweden, 1992.

\bibitem{KKW-stability-of-interior-problems}
E.~Katsevich, A.~Katsevich, and G.~Wang.
\newblock Stability of the interior problem with polynomial attenuation in the
  region of interest.
\newblock {\em Inverse Problems}, 28(6):065022, 2012.

\bibitem{KEQ-wavelet-methods-ROI-tomography}
E.~Klann, E.~T. Quinto, and R.~Ramlau.
\newblock {Wavelet methods for a weighted sparsity penalty for region of
  interest tomography}.
\newblock {\em Inverse Problems}, 31(2):025001, 2015.

\bibitem{LW-diffraction-tomography}
W.~R.~B. Lionheart and P.~J. Withers.
\newblock Diffraction tomography of strain.
\newblock {\em Inverse Problems}, 31(4):045005, 2015.

\bibitem{MA-poincare-derham-theorems}
S.~Mardare.
\newblock {On Poincar{\'e} and de Rham's theorems}.
\newblock {\em Rev. Roumaine Math. Pures Appl.}, 53(5-6):523--541, 2008.

\bibitem{ML-strongly-elliptic-systems}
W.~{McLean}.
\newblock {\em {Strongly Elliptic Systems and Boundary Integral Equations}}.
\newblock {Cambridge University Press}, {{F}irst} edition, 2000.

\bibitem{NA-mathematics-computerized-tomography}
F.~{Natterer}.
\newblock {\em {The Mathematics of Computerized Tomography}}.
\newblock SIAM, Philadelphia, 2001.
\newblock Reprint.

\bibitem{NA-mathematical-methods-image-reconstruction}
F.~{Natterer} and F.~{W\"ubbeling}.
\newblock {\em {Mathematical Methods in Image Reconstruction}}.
\newblock SIAM, Philadelphia, 2001.

\bibitem{NO-tomographic-recostruction-of-vector-fields}
S.~J. Norton.
\newblock {Tomographic Reconstruction of 2-D Vector Fields: Application to Flow
  Imaging}.
\newblock {\em Geophys. J. Int.}, 97(1):161--168, 1989.

\bibitem{NO-unique-tomographic-reconstruction-doppler}
S.~J. Norton.
\newblock {Unique Tomographic Reconstruction of Vector Fields Using Boundary
  Data}.
\newblock {\em IEEE Trans. Image Process.}, 1(3):406--412, 1992.

\bibitem{PSU-tensor-tomography-progress}
G.~P. Paternain, M.~Salo, and G.~Uhlmann.
\newblock {Tensor tomography: Progress and challenges}.
\newblock {\em Chin. Ann. Math. Ser. B}, 35(3):399--428, 2014.

\bibitem{PRI-inner-product-probes}
J.~Prince.
\newblock {Tomographic Reconstruction of 3-D Vector Fields Using Inner Product
  Probes}.
\newblock {\em IEEE Trans. Image Process.}, 3(2):216--219, 1994.

\bibitem{RA-microlocal-analysis-doppler-transform}
K.~Ramaseshan.
\newblock {Microlocal Analysis of the Doppler Transform on $\mathbb{R}^3$}.
\newblock {\em J. Fourier Anal. Appl.}, 10(1):73--82, 2004.

\bibitem{SU-attenuated-scalar-plus-one-form}
M.~Salo and G.~Uhlmann.
\newblock {The Attenuated Ray Transform on Simple Surfaces}.
\newblock {\em J. Differential Geom.}, 88(1):161--187, 2011.

\bibitem{SCHU-3d-doppler-transform-reconstruction-and-kernels}
T.~Schuster.
\newblock {The 3D Doppler transform: elementary properties and computation of
  reconstruction kernels}.
\newblock {\em Inverse Problems}, 16(3):701--722, 2000.

\bibitem{SCHU-20-years-of-vector-tomography}
T.~Schuster.
\newblock {20 years of imaging in vector field tomography: a review}.
\newblock In Y.~Censor, M.~Jiang, and A.~K. Louis, editors, {\em Mathematical
  Methods in Biomedical Imaging and Intensity-Modulated Dariation Therapy
  (IMRT)}, Publications of the Scuola Normale Superiore, CRM Series, volume 7.
  Birkh\"auser, 2008.

\bibitem{SCHU-importance-of-vector-field-tomography}
T.~Schuster.
\newblock {The importance of the Radon transform in vector field tomography}.
\newblock In R.~Ramlau and O.~Scherzer, editors, {\em The Radon Transform: The
  First 100 Years and Beyond}. de Gruyter, 2019.

\bibitem{SCHWA-flame-analysis-schlieren}
A.~{Schwarz}.
\newblock Multi-tomographic flame analysis with a schlieren apparatus.
\newblock {\em Meas. Sci. Technol.}, 7(3):406--413, 1996.

\bibitem{SHA-vector-tomography-incomplete-data}
V.~Sharafutdinov.
\newblock {Slice-by-slice reconstruction algorithm for vector tomography with
  incomplete data}.
\newblock {\em Inverse Problems}, 23(6):2603--2627, 2007.

\bibitem{SHA-integral-geometry-tensor-fields}
V.~A. Sharafutdinov.
\newblock {\em Integral geometry of tensor fields}.
\newblock Inverse and Ill-posed Problems Series. VSP, Utrecht, 1994.

\bibitem{SS-vector-field-overview}
G.~{Sparr} and K.~{Str\aa hl\'en}.
\newblock {Vector field tomography, an overview}.
\newblock Technical report, {Centre for Mathematical Sciences, Lund Institute
  of Technology}, Lund, Sweden, 1998.

\bibitem{SSLP-doppler-tomography-vector-fields}
G.~Sparr, K.~Str\r{a}hl\'en, K.~Lindstr\"om, and H.~W. Persson.
\newblock Doppler tomography for vector fields.
\newblock {\em Inverse Problems}, 11(5):1051--1061, 1995.

\bibitem{SU:microlocal-analysis-integral-geometry}
P.~{Stefanov} and G.~{Uhlmann}.
\newblock {\em Microlocal {A}nalysis and {I}ntegral {G}eometry (working
  title)}.
\newblock 2018.
\newblock Draft version.

\bibitem{SUV-local-invertibility-on-tensors}
P.~Stefanov, G.~Uhlmann, and A.~Vasy.
\newblock {Inverting the local geodesic X-ray transform on tensors}.
\newblock {\em J. Anal. Math.}, 136(1):151--208, 2018.

\bibitem{STR-reconstruction-from-doppler-radon-tranforms}
K.~Str\r{a}hl\'en.
\newblock {Reconstructions from Doppler Radon transforms}.
\newblock In {\em Proceedings of 3rd IEEE International Conference on Image
  Processing}, volume~2, pages 753--756, 1996.

\bibitem{TRE:topological-vector-spaces-distributions}
F.~{Tr\`eves}.
\newblock {\em Topological {V}ector {S}paces, {D}istributions and {K}ernels}.
\newblock Academic Press, {F}irst edition, 1967.

\bibitem{VER-integral-geometry-symmetric-tensor-incomplete}
L.~B. Vertgeim.
\newblock Integral geometry problems for symmetric tensor fields with
  incomplete data.
\newblock {\em J. Inverse Ill-Posed Probl.}, 8(3):355--364, 2000.

\bibitem{YYJW-high-order-TV-minimization}
J.~Yang, H.~Yu, M.~Jiang, and G.~Wang.
\newblock High-order total variation minimization for interior tomography.
\newblock {\em Inverse Problems}, 26(3):035013, 2010.

\end{thebibliography}
\bibliographystyle{abbrv}

\end{document}